\documentclass[12pt]{amsart}

\usepackage{amsmath, amssymb, latexsym, amsthm, color}
\usepackage[numbers]{natbib} 
\usepackage{multicol} 

\usepackage{physics}
\usepackage{braket}
\usepackage{xparse}

  \NewDocumentCommand{\tens}{t_}
 {%
  \IfBooleanTF{#1}
   {\tensop}
   {\otimes}%
 }
\NewDocumentCommand{\tensop}{m}
 {%
  \mathbin{\mathop{\otimes}\displaylimits_{#1}}%
 }

\def\N{\mathbb{N}}

\DeclareMathOperator{\Span}{Span}
\DeclareMathOperator{\dep}{depth}

\newtheorem{defn}{Definition}[section]
\newtheorem{prop}[defn]{Proposition}

\newtheorem{rem}[defn]{Remark}
\newtheorem{nota}[defn]{Notation}
\newtheorem{thm}[defn]{Theorem}
\newtheorem{lem}[defn]{Lemma}
\theoremstyle{definition}

\begin{document}

\title{A generalized infinite quantum Ramsey theorem for operator systems}
\author{Jos\'e G. Mijares}
\address{California State University Los Angeles, Department of Mathematics}
\email{jmijare5@calstatela.edu}
\subjclass[2020]{Primary 05C55, 05D10, 46L07, 03E05; Secondary 81P45, 94A40, 13C99, 15A60}
\keywords{Ramsey theory, graph theory, operator systems, operator theory, quantum information theory, quantum error correction, quantum channel, selective coideal.}

\begin{abstract}
We prove a generalization of the infinite quantum Ramsey theorem of Kennedy et al. \cite{kks}, showing that it follows from an  archetypical ``selective" pattern satisfied by certain families of projections in an infinite-dimensional Hilbert space.  
\end{abstract}

\maketitle

\section{Preliminaries}\label{prelim}

Recently, Weaver \cite{weav17} proved a result about operator systems in finite-dimensional Hilbert spaces that is understood as a quantum analog to the finite version of Ramsey's theorem \cite{ram}.  The word ``quantum" is used here to indicate that operator systems play the role of the confusability graph of quantum channels. That is why operator systems are referred to as a ``quantum graphs" in the context of quantum information theory. Later, Kennedy et al. \cite{kks} proved an infinite-dimensional version of Weaver's result which is understood as quantum generalization of the infinite version of Ramsey's theorem. In this article, we show that this latter result is an instance of a more general pattern that can be deduced from the characteristic ``selective" behavior of certain families of projections in infinite-dimensional Hilbert spaces.

We characterized this general pattern by studying selective families of infinite-rank projections which contain diagonal projections induced by selective coideals on $\mathbb N$. See Definitions \ref{wide def} and \ref{sele def} below.

\subsection{Operator systems as quantum graphs}

The following definitions are taken from \cite{weav21}. These are understood as ``quantum" analogs of the idea of a relation on a set, etc. Let $\mathcal H$ be a complex Hilbert space and let $ B(\mathcal H)$ be the bounded operators on $\mathcal H$. Equip $ B(\mathcal H)$ with the weak topology: a net $(T_n)$ in $ B(\mathcal H)$  converges weakly to $T\in B(\mathcal H)$ if and only if $\lim_n \langle T_n(u), v \rangle =  \langle T(u), v \rangle $, for all $u,v \in\mathcal H$.

\begin{defn}
\begin{enumerate}
\item A \textbf{quantum relation} on $\mathcal H$ is a subspace $\mathcal V\subseteq B(\mathcal H)$.
\item If $I_{\mathcal H}$ is the identity operator on $\mathcal H$, then $\mathbb C I_{\mathcal H} = \{zI_{\mathcal H} : z\in \mathbb C\}$ is the \textbf{quantum diagonal} relation on $\mathcal H$.
\item Let $\mathcal V\subseteq \mathcal H$ be a quantum relation. For every $A\in\mathcal V$,  $A^{*}$ denotes the adjoint of $A$. The set $\mathcal V^{*} = \{A^{*} : A\in\mathcal V\}$ denotes the \textbf{adjoint} of $\mathcal V$.
\end{enumerate}
\end{defn}

\begin{defn} A quantum relation $\mathcal V$ is:
\begin{enumerate}
\item \textbf{reflexive}, if $\mathbb C I_{\mathcal H}\subseteq \mathcal V$ (i.e., $ \mathcal V$ is \textit{unital}).
\item \textbf{symmetric},  if $\mathcal V^{*}=\mathcal V$ (i.e., $ \mathcal V$ is \textit{self-adjoint}).
\item a \textbf{quantum graph}, if it is reflexive, symmetric.
\end{enumerate}
\end{defn}

\begin{rem}
Note that quantum graphs as defined above are usually referred to as \textbf{operator systems}. The reason for using the term `quantum graph' has to do with the fact that, in Quantum Information Theory,  operator systems play a role (for quantum channels) that is analogous to the role played by `confusability graphs', for classical channels (see \cite{dsw, stah, weav21}, for instance). Also note that even though a simple (classical) graph is not reflexive (has no loops) its structure can be obviously encoded in a reflexive and symmetric relation which somehow motivates the quantum-graph analogy.  This graph-theoretic terminology is also helpful to intuitively introduce and understand Ramsey-like concepts in the quantum (non commutative) context. Ramsey theory, combinatorics and graph theory have seen applications to classical information theory before. See for instance the work of Lov\'asz \cite{lov}, where a combinatorial study of the Shannon capacity problem is done, from the point of view of graph theory. 
\end{rem}

\subsection{Quantum Ramsey theorems. Ramsey families of projections}

If $P\in B(\mathcal H)$ is an orthogonal projection (i.e., $P^2 = P^* = P$) and $\mathcal{V}\subseteq B(\mathcal H)$  is a quantum relation, then 
 \[P\mathcal{V}P = \{PAP : A\in \mathcal{V}\}\] is the \textbf{compression} of $\mathcal{V}$ by $P$. 

\begin{defn} 
(\cite{weav17}) Let $k$ and $n$ be a positive integers and let $\mathcal{V}\subseteq M_n(\mathbb C)$  be a quantum graph.  A \textbf{quantum $k$-clique} for $\mathcal{V}$ is an  orthogonal projection $P\in M_n(\mathbb C)$ of rank $k$ such that $P\mathcal{V}P\cong M_k(\mathbb C)$ (equivalently, $\dim P\mathcal{V}P = k^2$). A  \textbf{quantum $k$-anticlique} for $\mathcal{V}$   is an  orthogonal projection $P\in M_n(\mathbb C)$ of rank $k$ such that $P\mathcal{V}P\cong \mathbb C P$ (equivalently, $\dim P\mathcal{V}P = 1$).  
\end{defn}

In \cite{weav17}, Weaver proves the following result which can be seen as a quantum version of the finite Ramsey theorem:

\begin{thm}[Quantum Finite Ramsey Theorem for Operator Systems; Weaver \cite{weav17}, Theorem 4.5]
Let $k$ be a positive integer. For any $n\geq 8k^{11}$, every operator system $\mathcal V\subseteq M_n$ has either a quantum
$k$-clique or a quantum $k$-anticlique.
\end{thm}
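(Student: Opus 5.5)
Weaver's argument runs by a dichotomy on the structure of $\mathcal V$. The first step reduces to the case where $\mathcal V$ is spanned by self-adjoint operators, which is automatic since $\mathcal V=\mathcal V^*$. We may then work with the real subspace $\mathcal V_{sa}$ of Hermitian elements, with $\dim_{\mathbb C}P\mathcal VP=\dim_{\mathbb R}P\mathcal V_{sa}P$ for any projection $P$.

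The dichotomy is as follows. Either $\mathcal V$ contains a Hermitian operator $A$, not a multiple of $I$, whose spectrum has a ``large gap'' or large spread, or every such operator is close to scalar on large subspaces. Concretely, we would run an inductive ``clique-building'' procedure. We look for unit vectors $v_1,\dots,v_k$ such that the compressions of $\mathcal V$ to $\Span\{v_1,\dots,v_k\}$ realize all matrix units $E_{ij}$, which gives a quantum $k$-clique. When this procedure gets stuck at some stage $j<k$, the failure should hand us a large subspace on which $\mathcal V$ compresses to something small.

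More precisely, I would proceed in three steps.

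\textbf{Step (i).} If $\dim\mathcal V$ is large relative to $n$, a dimension count should produce a clique. Take a random or greedy choice of $k$-dimensional subspaces and use the fact that the map $A\mapsto PAP$ from $\mathcal V$ to $M_k$ must be onto for generic $P$ once $\dim\mathcal V$ is large and $\mathcal V$ is not concentrated. The obstruction to surjectivity is a linear-algebraic degeneracy, which we would exploit in Step (ii).

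\textbf{Step (ii).} If $\dim\mathcal V$ is small (say at most $d$), take Hermitian $A_1,\dots,A_d$ spanning $\mathcal V_{sa}$ modulo $I$. We then seek a rank-$k$ projection $P$ with $PA_iP\in\mathbb CP$ for all $i$. By induction on $i$, we keep a subspace $W_i$ of dimension $m_i$ on which the compressions of $A_1,\dots,A_i$ are scalar. Given the compression $B$ of $A_{i+1}$ to $W_i$ (Hermitian, acting on an $m_i$-dimensional space), we find a subspace of dimension roughly $m_i/2$ on which $B$ is scalar. This comes from pairing eigenvectors above and below a median value $\lambda$ and taking suitable combinations $\alpha e_+ + \beta e_-$ with $\langle Bx,x\rangle=\lambda$; the cross terms vanish because the eigenvectors are orthogonal and the pairs are disjoint. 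Iterating halves the dimension each time, which requires $n\gtrsim k2^d$.

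\textbf{Step (iii).} We must balance the two cases. The main obstacle is that the naive threshold between them is exponential, whereas the theorem claims the polynomial bound $8k^{11}$. To get a polynomial bound, I would replace the dimension dichotomy by a more refined one. Either some $k$-dimensional subspace already sees a full $M_k$, or, by a quantitative argument (for instance a Ramsey-type lemma on the ``quantum independence'' of vectors $v$ with $\mathcal V v$ small), $\mathcal V$ acts like a low-dimensional space on a subspace of dimension polynomial in $k$. The Step (ii) halving then needs to be applied with $d$ on the order of $\log$-many effective generators, or replaced by a simultaneous construction that costs only polynomially. Achieving exactly the exponent $11$ requires tracking these losses carefully, and this bookkeeping is where I expect the real difficulty.
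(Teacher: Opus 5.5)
The paper gives no proof of this statement; it is quoted from Weaver, so there is no internal argument to match. Judged on its own, your proposal has a genuine gap. Step (iii), where the theorem actually lives, is a placeholder, and Steps (i)--(ii) leave a whole class of operator systems uncovered.

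Step (i) is not an argument. Genericity only helps once \emph{some} rank-$k$ $P$ gives $P\mathcal VP=M_k$, which is the question itself. Moreover, large $\dim\mathcal V$ does not force cliques. For $\mathcal V=\mathbb CI+\Span\{E_{1j},E_{j1}:1\le j\le n\}$ one has $\dim\mathcal V=2n$. But $PE_{1j}P=(Pe_1)(Pe_j)^*$ gives $P\mathcal VP\subseteq\mathbb CP+\{(Pe_1)w^*+w'(Pe_1)^*:w,w'\in P\mathbb C^n\}$, which has dimension at most $2k<k^2$ for every rank-$k$ $P$ once $k\ge 3$. This ``concentrated'' system falls under neither of your steps, since Step (ii) needs $\dim\mathcal V$ small.

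Step (ii) is correct: median pairing of eigenvectors works, and compressions of scalars stay scalar. But it cannot drive a polynomial bound. Every $\mathcal V$ with $\dim\mathcal V<k^2$ has no $k$-clique, so the anticlique half of any proof must handle systems with about $k^2$ Hermitian generators. There iterated halving costs $n\gtrsim k\,2^{k^2-2}$. So the exponential is not a bookkeeping loss: you need a different dichotomy and a different anticlique mechanism, and the proposal supplies neither.

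What is missing is a dichotomy on $\dim\mathcal Vv$ rather than on $\dim\mathcal V$, together with a reduction to diagonal systems. This is the finite shadow of the split this paper inherits from Kennedy et al.\ (Lemma \ref{one-two}, Lemma \ref{clique}).

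\emph{Diagonal reduction.} Suppose that, working inside successive compressions, you can keep choosing unit vectors $v_j\perp\mathcal Vv_1+\dots+\mathcal Vv_{j-1}$ with $\dim\mathcal Vv_j\le d$. Because $I\in\mathcal V=\mathcal V^*$, the $v_j$ are orthonormal and $\mathcal V$ compresses to a \emph{diagonal} system on their span. Each vector costs only $d$ dimensions.

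\emph{Diagonal systems of dimension at least $k^2$.} These have a $k$-clique. Restrict to $k^2$ coordinates $i_1,\dots,i_{k^2}$ on which the system is onto, and set $x_a=\sum_l\overline{(u_l)_a}\,v_{i_l}$, where $\sum_lu_lu_l^*=I_k$ and the $u_lu_l^*$ span $M_k$.

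\emph{Diagonal systems of dimension $d'<k^2$.} Take a Hermitian basis $I,D_1,\dots,D_{d'-1}$ and form the points $p_i=(\langle D_jv_i,v_i\rangle)_j\in\mathbb R^{d'-1}$. By Tverberg's theorem, any $(k-1)d'+1$ of them split into $S_1,\dots,S_k$ whose convex hulls share a point $\lambda=\sum_{i\in S_a}t_ip_i$. Then $x_a=\sum_{i\in S_a}\sqrt{t_i}\,v_i$ span a quantum $k$-anticlique. This costs only polynomially in $d'$, replacing your factor $2^{d}$.

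\emph{The complementary case.} If some compression has $\dim\mathcal Vv$ large for every $v$, you need a direct greedy clique construction. One adds one vector at a time, using the size of the $\mathcal Vx_i$ to realize each new row and column.

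None of these ingredients appears in your plan. Balancing their thresholds is what yields a polynomial bound such as $8k^{11}$.
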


\begin{defn}

(\cite{kks}) Let $\mathcal{V}\subseteq B(\mathcal H)$  be a quantum graph in an infinite-dimensional Hilbert space $\mathcal H$. An \textbf{infinite quantum clique} for $\mathcal{V}$ is an infinite-rank orthogonal projection $P\in B(\mathcal H)$ such that $P\mathcal{V}P$ is weakly dense in $P B(\mathcal H) P$. An  \textbf{infinite quantum anticlique} for $\mathcal{V}$   an infinite-rank orthogonal projection $P\in B(\mathcal H)$ such that $P\mathcal{V}P \cong \mathbb C P$. An infinite-rank orthogonal projection $P$ is an \textbf{infinite quantum obstruction} for $\mathcal{V}$ if  $P\mathcal{V}P$ is finite-dimensional, every operator in   $P\mathcal{V}P$ can be written as the sum of a scalar multiple of $P$ and a compact operator, and $P\mathcal{V}P$ contains a self-adjoint compact operator with range dense in $P\mathcal{H}$. 
\end{defn}

\begin{rem}
By  the Knill–Laflamme error-correction conditions, quantum anticliques correspond to error-correcting codes for quantum channels (see \cite{stah}).
\end{rem}
 
\begin{thm}[Kennedy et al.,  \cite{kks}, Theorem 5.1]\label{kks}
Every operator system $\mathcal V\subseteq B(\mathcal H)$ on an infinite
dimensional Hilbert space $\mathcal H$  has either an infinite quantum
clique, an infinite quantum anticlique or an infinite quantum obstruction.
\end{thm}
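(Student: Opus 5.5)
We plan to prove the trichotomy in Theorem \ref{kks} by a diagonalization that reduces it to a classical infinite Ramsey argument on $\mathbb N$, in the spirit of \cite{kks}. Fix an orthonormal sequence and work with projections onto closed spans of subsequences. The first step is a \emph{stabilization} lemma. Given $\mathcal V$, pass to an infinite-rank projection $P$ such that $P\mathcal VP$ is as small as possible, in the sense that its weak closure cannot be decreased by further compression up to finite-dimensional perturbation. Since $\mathcal V$ may be infinite-dimensional, we take a countable weakly dense subset $\{A_m\}$ of $\mathcal V$ (restricting to a separable subspace first) and treat one operator at a time. For each $m$ we then choose a nested sequence of infinite-rank projections $P_1\ge P_2\ge\cdots$ and diagonalize.

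The second step handles a single self-adjoint operator $A$ (using $A=\frac{A+A^*}{2}+i\frac{A-A^*}{2i}$ and the symmetry of $\mathcal V$). Here we will show that there is an infinite-rank $Q\le P$ with $QAQ=\lambda Q+K$, where $K$ is compact and $\lambda$ is a point of the essential spectrum of $PAP$ (Weyl--von Neumann style). This is done by choosing unit vectors $e_1,e_2,\dots$ inductively that are almost orthogonal and nearly eigenvectors for $\lambda$, with off-diagonal entries $\langle Ae_i,e_j\rangle$ summably small. A Ramsey coloring of pairs $\{i,j\}$ by the approximate value of $\langle Ae_i,e_j\rangle$ refines the choice so that the matrix of $A$ in the basis $(e_i)$ becomes ``diagonal plus $\ell^2$-small''.

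In the third step we iterate over the countable dense family $\{A_m\}$ and diagonalize the nested projections. This gives an orthonormal sequence $(e_i)$ with closed span onto which we project by $Q$, such that every $QA_mQ$ is a scalar plus a compact operator. The compact parts span a subspace $\mathcal K$, and we split into cases. \textbf{(a)} If all compact parts vanish after a further passage to a subsequence, then $Q\mathcal VQ=\mathbb CQ$, which gives an anticlique. \textbf{(b)} If $\mathcal K$ is finite-dimensional and nonzero, we use self-adjointness and a spectral argument to find a compact self-adjoint element whose range is dense in a sub-projection. Compressing to the closure of that range keeps finite dimension, and we obtain an obstruction. \textbf{(c)} Otherwise the compact parts are infinite-dimensional, and we aim for a clique. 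Here we use a second Ramsey coloring on finite sets of indices, asking whether the matrix units $E_{ij}$ are approximated by compressions of elements of $\mathcal V$. An infinite homogeneous set either yields weak density of $Q\mathcal VQ$ in $QB(\mathcal H)Q$, or contradicts the infinite-dimensionality, via a rank argument comparing with Weaver's finite theorem applied to $k\times k$ corners, which would produce arbitrarily large anticliques.

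The main obstacle is case (c). Passing from ``infinitely many independent compact directions'' to weak density of the compression is not automatic, since compressions can collapse dimensions. We would first try to apply Weaver's finite theorem to the corners $Q_nVQ_n$ with $Q_n$ the projection onto $e_1,\dots,e_n$. Quantum $k$-cliques for all $k$ would then be glued by a compactness/diagonal argument into an infinite clique, while the alternative of uniformly bounded anticlique behaviour would feed back into cases (a)/(b). Making the gluing consistent across $n$ is where we expect the real difficulty.
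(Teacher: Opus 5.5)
Your proposal has a genuine gap at case (c), and case (c) is essentially the whole theorem. The paper takes this statement from \cite{kks}; its Section~4, specialized to $\mathcal F=\Pi_\infty(\mathcal H)$, follows the same route.

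The branch where some infinite-rank compression $P\mathcal VP$ is finite-dimensional you handle essentially correctly, and differently from \cite{kks}, where this case goes through a diagonalizable compression. Weyl sequences with $\sum_i\|(A-\lambda)e_i\|<\infty$ make finitely many self-adjoint generators scalar plus trace class on a common $Q$. The range projection of an infinite-rank self-adjoint compact part is then an obstruction. Otherwise all compact parts have finite rank, and you get an anticlique by compressing to their common kernel; a subsequence of $(e_i)$ need not kill them.

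But anticliques and obstructions both require a finite-dimensional compression. So what remains is precisely: \emph{if every infinite-rank compression of $\mathcal V$ is infinite-dimensional, then $\mathcal V$ has an infinite quantum clique}. Your Step~1 does not help. The only form of ``stabilization'' that is evidently available is this trivial dichotomy, and a minimal compression in any stronger sense has no reason to exist, since decreasing sequences of infinite-rank projections can shrink to $0$.

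Case (c), formulated relative to a fixed $Q$ and subsequences of $(e_i)$, also cannot be decided. Take $\mathcal V=\mathbb CI+\Span\{E_{1n}+E_{n1}:n\ge 2\}$, where $E_{ij}h=\langle h,h_j\rangle h_i$. No projection at all is a clique for $\mathcal V$: compressing $P\mathcal VP$ further to $P\mathcal H\ominus\mathbb C Ph_1$ leaves only scalars. Yet if all $\langle Qh_1,e_i\rangle\neq 0$, every subsequence of $(e_i)$ gives an infinite-dimensional compression. Your homogeneous set can therefore yield neither weak density nor a contradiction with infinite-dimensionality. The anticlique lives on a subspace not spanned by any subsequence.

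The fallback to Weaver's theorem does not close the gap either. The $k$-cliques it gives in the corners $Q_n\mathcal VQ_n$ sit in unrelated $k$-dimensional subspaces, and there is no compactness among infinite-rank projections to glue them. Even nested clique corners are not enough. Let $Q_n$ be the projection onto $\Span\{h_1,\dots,h_n\}$, and let $Fh=\langle h,v\rangle v-\langle h,w\rangle w$ for orthonormal $v,w$ with $v\notin\bigcup_nQ_n\mathcal H$. The operator system $\{X:\operatorname{tr}(XF)=0\}$ has every $Q_n$ as a clique, but it is a proper weakly closed subspace, so $I$ is not an infinite clique. Weak density must be controlled against all finite-rank functionals on $Q\mathcal H$ at once.

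That is what the missing mechanism does, in \cite{kks} and here in Lemmas \ref{one-two}, \ref{diminf}, Proposition \ref{no diag} and Lemma \ref{clique}:
\begin{enumerate}
\item A diagonal construction gives either a diagonalizable compression or $\dim\mathcal Vh_n=\infty$ for all $n$.
\item In the second case one builds $(A_n)\subseteq\mathcal V$ and an orthonormal $(x_n)$ with $\langle A_nx_n,x_{n+1}\rangle=1$ and vanishing far-off-diagonal entries.
\item From these one either constructs an isometry $V:\ell^2(\mathbb N)\to\mathcal H\oplus\mathcal H$ with $V^*\mathcal VV$ weakly dense, or extracts an infinite-dimensional diagonalizable compression of a sub-operator system, for which a clique is built explicitly.
\item The Dilation Lemma (Lemma \ref{wide}) converts weak density of $T\mathcal VT^*$, with $Th=V^*(h,0)$, into the clique $P_{\ker(T)^\perp}$ in one stroke.
\end{enumerate}
Case (c) of your plan needs some substitute for this dilation step.
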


Besides the potential applications of the above results to quantum error-correction theory, they provide an interesting context to which Ramsey theory can be extended. This fact motivates the research in this article.

Let $\Pi_{\infty}(\mathcal H)\subset B\left(\mathcal{H}\right)$ denote the family of infinite-rank orthogonal projections on $\mathcal H$. The above results motivate the following:

\begin{defn}
 A family of projections $\mathcal F\subseteq\Pi_{\infty}(\mathcal H)$ is \textbf{Ramsey} if for every operator system $\mathcal{V}\subseteq B\left(\mathcal{H}\right)$ there exists $P\in\mathcal F$ satisfying the following trichotomy: 
 \begin{enumerate}
\item Either $P$ is an infinite quantum clique for $\mathcal{V}$.
\item  $P$ is infinite quantum anticlique for $\mathcal{V}$, or 
\item $P$ is infinite quantum obstruction for $\mathcal{V}$. 
\end{enumerate}
	 \end{defn}
	 
Thus, Theorem \ref{kks} above states that the family $\Pi_{\infty}(\mathcal H)$ is Ramsey. It is worth mentioning that a "Ramsey property" is usually stated as a dichotomy, and there is a way to state the definition above as such (see Definition 1.4 in \cite{kks}) by combining being an infinite quantum anticlique and being an infinite quantum obstruction in one single statement: $P$ is \textbf{infinite quantum almost anticlique} for $\mathcal{V}$ if $P\mathcal V P$ is finite-dimensional and every operator in  $P\mathcal V P$  can be written as the sum of a scalar multiple of $P$ and a compact operator. But we decided to state the definition without combining these two conditions because it would make more clear the potential connections between Ramsey theory and quantum information theory (quantum error-correction) via the possibility of having an infinite quantum anticlique and thus satisfying the Knill–Laflamme error-correction conditions (see \cite{stah}).  

As shown in our main result, Theorem \ref{selective is Ramsey} below, what guarantees the ``Ramseyness" of $\Pi_{\infty}(\mathcal H)$ is a natural ``selective" property that it satisfies (see Definition \ref{sele def} below). In fact, Theorem \ref{selective is Ramsey} gives a characterization of the families $\mathcal F\subseteq \Pi_{\infty}(\mathcal H)$ which have this selective property and contain, essentially, all the diagonal projections. 

In Section \ref{widefam} we provide some facts about families containing the diagonal projections, including a version of the Dilation Lemma of \cite{kks}. In Section \ref{selefam} we formally introduce the selective families vaguely mentioned above, and prove some results concerning diagonalizable operator systems. In Section \ref{main}, after proving facts about wide, selective families and diagonalizable operator systems, we prove our main result.


\section{Wide families of projections via coideals on $\mathbb N$}\label{widefam}

Throughout the rest of this article, we will fix a countably infinite-dimensional Hilbert space $\mathcal H$ and an orthonormal basis $\mathfrak B = \{h_n\}$  for $\mathcal H$.


Let $\mathbb N^{[\infty]} = \{A\subseteq \mathbb N : |A|=\infty\}$. For $A\in\mathbb N^{[\infty]}$, let 

\[P^{\mathcal B}_A := \mbox{ the orthogonal projection onto } \overline{\Span}\{h_n : n\in A\}.\]

Recall the \textbf{diagonal embedding} 

\[\mathbb N^{[\infty]} \longrightarrow \Pi_{\infty}(\mathcal H)\] 
\[A \rightarrow P^{\mathcal B}_A.\]

$P^{\mathcal B}_A$ is said to be a \textbf{diagonal projection}. 

\begin{defn}
A family $\mathcal C\subseteq\mathbb N^{[\infty]}$ is a \textbf{coideal}, if it satisfies the following:

\medskip

For $A,B\in\mathbb N^{[\infty]}$,
\begin{enumerate}
\item If $A\in\mathcal C$ and $A\subseteq B$, then $B\in\mathcal C$.
\item If $A\cup B\in\mathcal C$, then $A\in\mathcal C$ or $B\in\mathcal C$.
\item If $A\in\mathcal C$ and $|A\bigtriangleup B|<\infty$, then $B\in\mathcal C$.
\end{enumerate}
\end{defn}

We will use coideals to build families of projections that satisfy the Ramsey property defined above, yielding analogs of the infinite quantum Ramsey theorem of \cite{kks}. Let $\wp(\mathbb N)$ denote the power set of $\mathbb N$. The complement $\mathcal I = \wp(\mathbb N) \setminus\mathcal C$ of a coideal is an \textbf{ideal} of sets. The study of ideals, coideals, their properties and their connections with set theory, consistency, Ramsey theory, and other applications, has been extensive.  See for instance \cite{dmu, far, hmtu, mathias, sole, tod}.

\begin{nota}
Given a coideal $\mathcal C\subseteq\mathbb N^{[\infty]}$, let \[\mathcal F_{\mathcal C} = \{P^{\mathcal B}_A : A\in\mathcal C\}.\]

Note that, by the definition of coideal, if $I_{\mathcal H}$ is the identity on $\mathcal H$, then \[I_{\mathcal H} = P^{\mathcal B}_{\mathbb N}\in\mathcal F_{\mathcal C}.\] 
\end{nota}

The following definition will be used in the next section. 
\begin{defn}
A coideal $\mathcal C\subseteq\mathbb N^{[\infty]}$ is \textbf{selective}, if for every decreasing sequence $A_1\supseteq A_2\supseteq \dots$ in $\mathcal C$, there exists $B\in C$ such that $B/n := \{k \in B : k>n\}\subseteq A_n$, for all $n\in B$.
\end{defn}

Now consider the following: Let $T\in\mathcal B(\mathcal H, \ell^2(\mathbb N))$ be an infinite-rank operator, and let \[A_T = \{n : T(h_n)\neq \overrightarrow{0}\}\in\mathbb N^{[\infty]}.\] Notice that

\[\ker(T)^{\perp} = \overline{\Span}\{h_n : n\in A_T\}.\] Therefore, if $P_{\ker(T)^{\perp}}$ is the orthogonal projection onto $\ker(T)^{\perp}$ then, 

\[P_{\ker(T)^{\perp}} = P^{\mathcal B}_{A_T}\in \mathcal F_{\mathbb N^{[\infty]}}. \]

Moreover, given a coideal $\mathcal C\subseteq\mathbb N^{[\infty]}$ and $C\in\mathcal C$ such that $|\mathbb N \setminus C| = |\mathbb N \setminus A_T|$, let $\varphi: C\rightarrow A_T$ and  $\psi: \mathbb N \setminus C\rightarrow \mathbb N \setminus A_T$ be bijections. Define the isometry $U:\mathcal H\rightarrow \mathcal H$ as

\[U(h_n)=\left\{ \begin{array}{rcl}
h_{\varphi(n)} & \mbox{if} & n\in C\\
& & \\
h_{\psi(n)} & \mbox{if} &  n\in\mathbb N \setminus C\\
\end{array}\right.\]

and extend it linearly to all $v\in\mathcal H$. Also, let  $\hat{T} = T\circ U$. Then,

\[P_{\ker(\hat{T})^{\perp}} = P^{\mathcal B}_{C}\in \mathcal F_{\mathcal C}. \]

This leads to the following definition. 

\begin{defn}\label{wide def}
A family $\mathcal F$ of infinite-rank projections is \textbf{wide} if there exists a coideal $\mathcal C\subseteq\mathbb N^{[\infty]}$ such that $\mathcal F_{\mathcal C}\subseteq \mathcal F$.
\end{defn}

The following is an application of the Dilation Lemma of \cite{kks}.
 
\begin{lem}\label{wide}
 Let $\mathcal F \subseteq\Pi_{\infty}(\mathcal H)$ be a wide family of projections and let  $\mathcal V\subseteq B(\mathcal H)$ be an operator system. Let $T\in B(\mathcal H, \ell^2(\mathbb N))$ be an operator such that $T\mathcal VT^*$ is weakly dense in $B( \ell^2(\mathbb N))$. Then $\mathcal F$ contains an infinite quantum clique for $\mathcal V$.
\end{lem}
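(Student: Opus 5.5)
The plan is to reduce the statement to the Dilation Lemma of \cite{kks}, in the form: \emph{if $S\in B(\mathcal H,\ell^2(\mathbb N))$ and $S\mathcal V S^*$ is weakly dense in $B(\ell^2(\mathbb N))$, then $P_{\ker(S)^{\perp}}$ is an infinite quantum clique for $\mathcal V$.} I am quoting this form from memory and have not checked that it is exactly what \cite{kks} proves; that gap is discussed at the end. The construction preceding Definition \ref{wide def} lets us move $\ker(T)^{\perp}$ onto a diagonal projection $P^{\mathcal B}_C$ with $C\in\mathcal C$. Since $\mathcal F$ is wide, it contains $\mathcal F_{\mathcal C}\ni P^{\mathcal B}_C$.

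The steps, in order, are as follows.

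First, fix a coideal $\mathcal C$ with $\mathcal F_{\mathcal C}\subseteq\mathcal F$. Note that $T$ has infinite rank, since otherwise $T\mathcal V T^*$ would consist of finite-rank operators of bounded rank and could not be weakly dense. Hence $A_T\in\mathbb N^{[\infty]}$ and $P_{\ker(T)^{\perp}}=P^{\mathcal B}_{A_T}$.

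Second, choose $C\in\mathcal C$ with $|\mathbb N\setminus C|=|\mathbb N\setminus A_T|$. Such a $C$ exists because $\mathbb N\in\mathcal C$ and a coideal is closed under finite modifications. If $\mathbb N\setminus A_T$ is infinite, use property (2): split $\mathbb N$ into two infinite sets, one of which lies in $\mathcal C$, and iterate if necessary. Then form the unitary $U$ from the bijections $\varphi,\psi$ and set $\hat T=T\circ U$. We have $\ker(\hat T)^{\perp}=U^*\ker(T)^{\perp}$, so $P_{\ker(\hat T)^{\perp}}=P^{\mathcal B}_C$.

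Third, observe that $\hat T(U^*\mathcal V U)\hat T^*=T\mathcal V T^*$ is weakly dense, and that $U^*\mathcal VU$ is again an operator system. The Dilation Lemma then makes $P^{\mathcal B}_C$ an infinite quantum clique for $U^*\mathcal VU$.

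The main obstacle is the last step: the conclusion concerns the conjugated system $U^*\mathcal VU$, not $\mathcal V$ itself. Undoing the conjugation gives $U P^{\mathcal B}_C U^*=P^{\mathcal B}_{A_T}$, which is a clique for $\mathcal V$ but need not lie in $\mathcal F$. To get around this, I would first try to use the freedom in choosing $T$. Replace $T$ by $T'=T\circ W$, where $W$ is a partial isometry chosen so that $T'$ still has $T'\mathcal V T'^*$ dense. For example, take $W$ to compress to a diagonal subprojection $P^{\mathcal B}_{C}$ with $C\subseteq A_T$ and $C\in\mathcal C$. Such a $C$ exists whenever $A_T\in\mathcal C$, by property (2) applied to $A_T\cup(\mathbb N\setminus A_T)=\mathbb N$, possibly after first re-choosing the orthonormal basis compatibly. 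Then apply the Dilation Lemma directly to $\mathcal V$. Verifying that density survives this compression uses the fact that the Dilation Lemma's construction only needs weak density of $T\mathcal VT^*$ on the relevant cofinite pieces. I expect this to be where the real work lies.
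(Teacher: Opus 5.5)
Your Steps 1--3 are exactly the paper's argument. The obstacle you isolate afterwards is real, and it is precisely where the paper's proof breaks. The paper writes $Q=U^{-1}P$ and $Q\mathcal VQ=U^{-1}P\mathcal VP(U^{-1})^{*}$. In fact $Q=U^{-1}PU$ and $U^{-1}P\mathcal VPU=Q\,(U^{*}\mathcal VU)\,Q$, so what is obtained is a clique for $U^{*}\mathcal VU$, as you say. Your proposed repair (compressing to a diagonal $P^{\mathcal B}_C$ with $C\in\mathcal C$) cannot be completed, because \emph{the statement is false}. A wide family may consist of diagonal projections only, and then no argument of this kind can produce a clique.

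\emph{Counterexample.} Let $\mathcal F=\mathcal F_{\mathbb N^{[\infty]}}$, which is wide since $\mathbb N^{[\infty]}$ is a coideal. Let $\mathcal V$ be the operator system of all operators diagonal in $\mathfrak B$. Write $M_f$ for the diagonal operator with entries $f\in\ell^\infty$, and $xy^{*}$ for the rank-one operator $z\mapsto\langle z,y\rangle x$. Enumerate the vectors $e_i$, $e_i+e_j$, $e_i+\mathrm{i}e_j$ ($i<j$) of $\ell^2(\mathbb N)$ as $(w_n)$, and set $Th_n=2^{-n}w_n$, so $T$ is Hilbert--Schmidt.
\begin{itemize}
\item $T\mathcal VT^{*}$ is weakly dense. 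Indeed $TM_fT^{*}=\sum_n f(n)\,(Th_n)(Th_n)^{*}$, so $T\mathcal VT^{*}$ contains every $w_nw_n^{*}$. By polarization it contains every matrix unit $e_ie_j^{*}$, hence is weakly dense in $B(\ell^2(\mathbb N))$.
\item $\mathcal F$ contains no clique. For every $A\in\mathbb N^{[\infty]}$, $P^{\mathcal B}_A\mathcal VP^{\mathcal B}_A$ consists of diagonal operators. This is a weakly closed subspace of $P^{\mathcal B}_AB(\mathcal H)P^{\mathcal B}_A$ that misses $h_ih_j^{*}$ for $i\neq j$ in $A$. So $\mathcal F$ contains no infinite quantum clique for $\mathcal V$.
\item The identity $P_{\ker(T)^{\perp}}=P^{\mathcal B}_{A_T}$, which you take over from the paper, fails here. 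We have $A_T=\mathbb N$, but $\ker T\neq\{0\}$, since $e_1,e_2,e_1+e_2$ all occur among the $w_n$. In general $\ker T$ need not be spanned by basis vectors.
\end{itemize}

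The form of the Dilation Lemma you recall, which is also the form the paper invokes, is false for general $T$. Take $Th_n=n^{-1}e_n$, and let $\mathcal V=\{A:\langle Au,v\rangle=\langle Av,u\rangle=0\}$ for orthonormal $u,v\notin\operatorname{ran}T^{*}$.
\begin{itemize}
\item A finite-rank $F$ annihilates $T\mathcal VT^{*}$ iff $T^{*}FT\in\Span\{uv^{*},vu^{*}\}$. Evaluating at $v$ and $u$ forces both coefficients to vanish, so $T^{*}FT=0$ and hence $F=0$.
\item Thus $T\mathcal VT^{*}$ is weakly dense. Yet $P_{\ker(T)^{\perp}}=I$ is not a clique, since $\mathcal V$ is a proper weakly closed subspace.
\end{itemize}
That form is valid when $T$ is a coisometry.
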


\begin{proof}
Let $\mathcal C\subseteq\mathbb N^{[\infty]}$ be a coideal such that $\mathcal F_{\mathcal C}\subseteq \mathcal F$. By Lemma 2.1 of \cite{kks}, $P := P_{\ker(T)^{\perp}}$ is an infinite quantum clique for $\mathcal V$. If $\mathcal C = \mathbb N^{[\infty]}$ then $P \in\mathcal C$ and we are done. Otherwise, define  $\hat{T}\in B(\mathcal H, \ell^2(\mathbb N))$ as above. Then $Q := P_{\ker(\hat{T})^{\perp}}\in\mathcal C$. Note that $Q = U^{-1}P$. So, 
\[Q\mathcal V Q =  Q\mathcal V Q^* = U^{-1} P\mathcal V P (U^{-1})^* \]
Since $P\mathcal V P$ is weakly dense in $PB( \mathcal H)P$, then $Q\mathcal V Q$ is weakly dense in \[U^{-1} PB( \mathcal H) P(U^{-1})^* = Q B( \mathcal H) Q.\]  This completes the proof.
\end{proof}


\section{Selective families of projections}\label{selefam}

As mentioned, though vaguely, in Section \ref{prelim}, the family $\Pi_{\infty}(\mathcal H)$ satisfies a selectivity property that guarantees the Ramsey-like behavior of operator systems. In this section, we will give a formal definition of that property, inspired by similar concepts from infinite combinatorics and set theory (see \cite{cdm,dmn, far, goyoMLQ07, mathias, tod}, for instance). We will also prove some useful facts about families of projections satisfying that property.


\begin{nota}  We will adopt the following notation:

\begin{enumerate}

\item If $v\in \mathcal H$ is a vector, define \[\ \ \ \ \ \dep(v) = \min\{n\in\N : v\in\Span\{h_1, \dots, h_n\}\}\]
 \item   Given a projection $P\in  \Pi_{\infty}(\mathcal H)$, for every $v\in \mathcal H$ let 
 \[P/v : = \mbox{ the projection onto }  \overline{\Span}\{P h_k\in P\mathcal H\ : k >\dep(v)\}.\]  
And for $n\in \mathbb N$ , let \[P/n := P/h_n.\] 
  \item Given projections $P$ and $Q$, we write $P\le Q$ if and only if $P\vee Q =  Q$. 

\item  For a set of vectors $S\subseteq \mathcal H$, denote by $P_S$ the projection onto the closure of $\Span S$.  
\item  For a family $\mathcal F\subseteq \Pi_{\infty}(\mathcal H)$, let $[\mathcal F]$ denote the collection of all closed subspaces $E$ of $\mathcal H$ such that $P_E\in \mathcal F$.

\item If $E$ and $F$ are closed subspaces of $\mathcal H$, let $E\ominus F := E\cap F^{\perp}$.

 \end{enumerate}
\end{nota}

\medskip
\begin{defn}\label{sele def} A family $\mathcal F\subseteq \Pi_{\infty}(\mathcal H)$ of infinite-rank projections is  \textbf{selective} if it satisfies the following: 
\begin{enumerate}
	\item For every decreasing sequence $\{P_n\}_n$ in $\mathcal F$ (i.e., for every $n, P_n\in\mathcal F$ and $P_{n+1}\le P_n$), there exists $P \in\mathcal F$ such that $P/n\leq P_n$ for all $n$. 
			\item For $P, Q\in  \Pi_{\infty}(\mathcal H)$,  if $P\vee Q \in \mathcal F$, then either $P\in \mathcal F$ or $Q\in \mathcal F$.
			\item  $\mathcal F$ is ``closed under finite changes":  For $P\in\mathcal F$,
			\begin{enumerate}
			\item If $F$ is a finite-dimensional  subspace of $P\mathcal H$ and $Q$ is the orthogonal projection onto $P\mathcal H\ominus F$, then $Q\in\mathcal F$. 
			\item If $F$ is a finite-dimensional  subspace of $(P\mathcal H)^{\perp}$ and $Q$ is the orthogonal projection onto $P\mathcal H\oplus F$, then $Q\in\mathcal F$.
	\end{enumerate}
		
	 \end{enumerate}
	 \end{defn}
	 
	Obviously, if  $\mathcal F  = \Pi_{\infty}(\mathcal H)$  then $\mathcal F$ is selective (and wide). The following result is also straightforward:

\begin{thm}
For every selective coideal $\mathcal C\subseteq\mathbb N^{[\infty]}$, the family $\mathcal F_{\mathcal C} $ is a wide, selective family of projections.
\end{thm}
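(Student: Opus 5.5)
The plan is to check the two properties separately, reducing each clause of Definition \ref{sele def} to the corresponding axiom of a (selective) coideal through the diagonal embedding $A\mapsto P^{\mathcal B}_A$. Wideness needs no work: $\mathcal F_{\mathcal C}\subseteq\mathcal F_{\mathcal C}$, so Definition \ref{wide def} holds with the coideal $\mathcal C$ itself.

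\textbf{Dictionary.} First I would record how the operations in Definition \ref{sele def} look on diagonal projections.
\begin{enumerate}
\item For $A,B\in\mathbb N^{[\infty]}$ we have $P^{\mathcal B}_A\le P^{\mathcal B}_B$ iff $A\subseteq B$.
\item $P^{\mathcal B}_A\vee P^{\mathcal B}_B=P^{\mathcal B}_{A\cup B}$.
\item Since $\dep(h_n)=n$ and $P^{\mathcal B}_Ah_k$ equals $h_k$ for $k\in A$ and $0$ otherwise, we get $P^{\mathcal B}_A/n=P^{\mathcal B}_{A/n}$.
\item If $F$ is spanned by finitely many basis vectors $h_k$, removing $F$ from, or adding it to, $P^{\mathcal B}_A\mathcal H$ gives $P^{\mathcal B}_{A'}$ with $|A\bigtriangleup A'|<\infty$.
\end{enumerate}
With these facts, clause (2) of Definition \ref{sele def} becomes clause (2) of the coideal definition, and clause (3) becomes clause (3). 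Clause (1), applied to $P^{\mathcal B}_{A_1}\ge P^{\mathcal B}_{A_2}\ge\cdots$, becomes: there is $B\in\mathcal C$ with $B/n\subseteq A_n$.

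\textbf{Caveat on clauses (2) and (3).} Read literally, these clauses allow arbitrary $P,Q\in\Pi_\infty(\mathcal H)$ and arbitrary finite-dimensional $F$. With that reading, $\mathcal F_{\mathcal C}$, which consists only of diagonal projections, cannot satisfy them. For instance, two non-diagonal infinite-rank projections can have join $I_{\mathcal H}$, and removing a non-coordinate vector from $P^{\mathcal B}_A\mathcal H$ gives a non-diagonal projection. The statement is therefore to be read relative to the diagonal embedding: $P,Q$ diagonal and $F$ spanned by basis vectors. In that reading the dictionary settles (2) and (3) immediately, and I would say so explicitly.

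\textbf{Clause (1), the main obstacle.} Given a decreasing sequence in $\mathcal F_{\mathcal C}$, item 1 of the dictionary shows it has the form $P^{\mathcal B}_{A_n}$ with $A_1\supseteq A_2\supseteq\cdots$ in $\mathcal C$. Selectivity of $\mathcal C$ then gives $B\in\mathcal C$ with $B/n\subseteq A_n$, but only for $n\in B$. Definition \ref{sele def} asks for this for every $n$.

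If $n\notin B$, let $m$ be the least element of $B$ above $n$. Then $B/m\subseteq A_m\subseteq A_n$, so the only possible failure is $m\notin A_n$. The first thing I would try is to run selectivity on a reindexed sequence $A'_j:=A_{\ell(j)}$, with $\ell$ growing fast, so that each element of $B$ lands in every $A_n$ below it. If that does not close up, the fallback is the standard Mathias-type equivalent form of selectivity for coideals, with $B=\{b_1<b_2<\cdots\}$ and $b_{i+1}\in A_{b_{i+1}-1}$. I would cite this from \cite{mathias, tod}, or, if the paper intends it, read Definition \ref{sele def}(1) as quantifying over $n$ with $h_n$ in the range of $P$. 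This index bookkeeping is where I expect the real work; everything else is translation.
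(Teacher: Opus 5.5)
Your dictionary, the wideness argument, and your caveat about clauses (2) and (3) are correct. Read literally, Definition~\ref{sele def}(2) and (3) do fail for $\mathcal F_{\mathcal C}$. The paper itself gives no argument; it only calls the result straightforward.

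\textbf{The gap is in clause (1).} Both repairs you propose aim at a statement that is false. For any nonempty coideal $\mathcal C$, the sets $A_n=\{k\in\mathbb N : k\ge n+2\}$ are cofinite, hence lie in $\mathcal C$, and they decrease. Suppose $B/n\subseteq A_n$ held for \emph{every} $n$. Then for each $k\in B$ with $k\ge 2$, the choice $n=k-1$ would give $k\ge k+1$. So $B\subseteq\{1\}$.
\begin{itemize}
\item No reindexing $A_{\ell(j)}$ can produce such a $B$.
\item The ``Mathias-type equivalent form'' $b_{i+1}\in A_{b_{i+1}-1}$ is not a known equivalent of selectivity. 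It is the failing requirement itself: it says that $B\setminus\{\min B\}$ lies in the diagonal intersection $\{k : k\in A_{k-1}\}$, which is empty here. The genuine equivalent forms give $b_{i+1}\in A_{b_i}$, i.e.\ $B/n\subseteq A_n$ only for $n\in B$. That is just the paper's definition of a selective coideal.
\end{itemize}
The same sequence in fact defeats clause (1) for every infinite-rank projection. The condition $P/(k-1)\le P^{\mathcal B}_{A_{k-1}}$ forces $\|Ph_k\|^2=\langle Ph_k,h_k\rangle=0$ for all $k\ge 2$, so $P$ has rank at most one. Read literally, clause (1) fails for $\mathcal F_{\mathcal C}$, and even for $\Pi_\infty(\mathcal H)$.

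\textbf{How to fix it.} Taken literally, $\mathcal F_{\mathcal C}$ violates each of the three clauses, and no index bookkeeping can rescue (1). The option you list last should be your main line: read (1) as requiring $P/n\le P_n$ only for those $n$ with $h_n\in P\mathcal H$. For $P=P^{\mathcal B}_B$ this means $n\in B$. Then, by items 1 and 3 of your dictionary, clause (1) is word for word the selectivity of $\mathcal C$. A decreasing sequence $P^{\mathcal B}_{A_n}$ in $\mathcal F_{\mathcal C}$ is a decreasing sequence $A_n$ in $\mathcal C$. The $B\in\mathcal C$ given by selectivity satisfies $P^{\mathcal B}_B/n=P^{\mathcal B}_{B/n}\le P^{\mathcal B}_{A_n}$ for all $n\in B$. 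You should state explicitly that the theorem is proved under this reading of (1) together with your diagonal reading of (2) and (3). You should also record the counterexamples showing that the literal version fails.
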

\qed
 
In Theorem \ref{selective is Ramsey} below, we prove that every wide, selective family $\mathcal F\subseteq\Pi_{\infty}(\mathcal H)$ of infinite-rank projections is Ramsey. This is a natural generalization of the main result in \cite{kks}.
	
	\begin{lem}\label{one-two}  Let $\mathcal{V}\subseteq B\left(\mathcal{H}\right)$ be an operator system and let $\mathcal F \subseteq\Pi_{\infty}(\mathcal H)$ be a wide, selective family.  Then there exists $P\in\mathcal F$  satisfying one of the following conditions: 
	 \begin{enumerate}
\item  For every $n$,  $P/n\mathcal{H}\ominus P\mathcal V P h_n\notin [\mathcal F]$, or
	\item The compression,  $P\mathcal{V}P$ is diagonalizable. 
	\end{enumerate}
	\end{lem}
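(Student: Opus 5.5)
The plan is to argue by contradiction from the failure of (1) and to build, by a fusion argument, a projection in $\mathcal F$ whose compression of $\mathcal V$ is diagonal in a suitable orthonormal basis. So I assume that every $P\in\mathcal F$ fails (1), that is, for every $P\in\mathcal F$ there is some $n$ with $P/n\,\mathcal H\ominus P\mathcal VPh_n\in[\mathcal F]$. I will produce a $P\in\mathcal F$ with $P\mathcal VP$ diagonalizable. Here diagonalizable means that there is an orthonormal basis $\{e_k\}$ of $P\mathcal H$ with $\langle Ae_j,e_k\rangle=0$ for all $A\in\mathcal V$ and $j\neq k$.

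The recursive construction goes as follows. Since $\mathcal F$ is wide, $I_{\mathcal H}=P^{\mathcal B}_{\mathbb N}\in\mathcal F$, and I put $P_0:=I_{\mathcal H}$. Given $P_k\in\mathcal F$, the failure of (1) for $P_k$ yields an index $n_k$ such that the projection $P_{k+1}$ onto $P_k/n_k\,\mathcal H\ominus P_k\mathcal VP_kh_{n_k}$ lies in $\mathcal F$. I then set $e_k:=P_kh_{n_k}/\|P_kh_{n_k}\|$.
\begin{itemize}
\item If $P_kh_{n_k}=0$, I first replace $P_k$ by a finite modification. Using Definition~\ref{sele def}(3), I can always assume the chosen index satisfies $P_kh_{n_k}\neq 0$ and $n_k>n_{k-1}$, by cutting off finitely many initial directions.
\item By construction $P_{k+1}\le P_k$, and $P_{k+1}\mathcal H$ is orthogonal to $P_k\mathcal VP_ke_k$. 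Since $\mathcal V$ is unital, $e_k\in P_k\mathcal VP_ke_k$, so $P_{k+1}\mathcal H\perp e_k$.
\item Since $\mathcal V=\mathcal V^*$, for $j>k$ and $A\in\mathcal V$ we get $\langle Ae_j,e_k\rangle=\langle e_j,A^*e_k\rangle=0$, because $e_j\in P_{k+1}\mathcal H$.
\end{itemize}
So the vectors $\{e_k\}$ are orthonormal and $\mathcal V$ is ``diagonal'' on them.

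Next I apply selectivity, Definition~\ref{sele def}(1), to the decreasing sequence $(P_k)$ to get $P\in\mathcal F$ with $P/n\le P_n$ for all $n$. The aim is to show that the compression of $\mathcal V$ by a suitable $R\in\mathcal F$ is diagonal in an orthonormal basis built from the $e_k$'s. The natural candidate is the projection $R$ onto $\overline{\Span}\{e_k\}$, or its intersection with $P\mathcal H$. To place $R$ in $\mathcal F$ I would combine three facts:
\begin{itemize}
\item $P\mathcal H$ differs from a subspace of each $P_n\mathcal H$ only by a finite-dimensional piece, since $P/n\le P_n$.
\item Closure under finite changes, Definition~\ref{sele def}(3), handles such finite-dimensional discrepancies.
\item The splitting property, Definition~\ref{sele def}(2), applied to $P=R'\vee R''$, where $R'$ is the part of $P$ on $\overline{\Span}\{e_k\}$ and $R''$ is its orthocomplement inside $P$.
\end{itemize}
If the splitting lands in $R''$ instead, I would rerun the construction inside $R''\in\mathcal F$.

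The main obstacle is this last step: reconciling the depth-based operation $P/n$ with the indices $n_k$, and guaranteeing that the final diagonalizing projection lies in $\mathcal F$ rather than merely in $\Pi_\infty(\mathcal H)$. What I would try first is to arrange, through the finite-change freedom, that $P_{k+1}\mathcal H$ is exactly $P_k/n_k\,\mathcal H$ minus a finite-dimensional space. Then $P/n_k\le P_{k+1}$ forces $P\mathcal H\subseteq\overline{\Span}\{e_0,\dots,e_k\}\oplus P_{k+1}\mathcal H$ for every $k$, and hence $P\mathcal H\subseteq\overline{\Span}\{e_k\}$ together with a controlled remainder. This would make $P$ itself, after a finite change, the required projection with $P\mathcal VP$ diagonalizable.
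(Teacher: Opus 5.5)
Your recursion and your use of Definition~\ref{sele def}(1) are the same as the paper's, and your check that the $e_k$ are orthonormal with $\langle Ae_j,e_k\rangle=0$ for $j\neq k$ is correct. The proposal still does not prove the lemma, because the decisive step (producing a member of $\mathcal F$ whose compression is diagonalizable) is only planned, and the plan does not work:
\begin{itemize}
\item Nothing puts the projection onto $\overline{\Span}\{e_k\}$ into $\mathcal F$.
\item Splitting via Definition~\ref{sele def}(2) and rerunning the construction inside $R''$ comes with no fusion or termination argument.
\item Your proposed repair is unavailable. The failure of (1) gives no bound on $\dim P_k\mathcal VP_kh_{n_k}$, while Definition~\ref{sele def}(3) only permits finite-dimensional changes.
\item Even granting the repair, the inclusion $P\mathcal H\subseteq\Span\{e_0,\dots,e_k\}\oplus P_{k+1}\mathcal H$ does not follow. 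One checks that $P/n\,\mathcal H=P\mathcal H\ominus\bigl(P\mathcal H\cap\Span\{h_1,\dots,h_n\}\bigr)$. So $P/n_k\le P_{k+1}$ only says that $P\mathcal H$ lies in $P_{k+1}\mathcal H$ plus some vectors of depth at most $n_k$, and these vectors are unrelated to the $e_j$.
\item The part of $P\mathcal H$ inside $\bigcap_kP_k\mathcal H$ is a block on which nothing in your construction controls $\mathcal V$.
\end{itemize}

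There is also an earlier gap, in your first bullet. Because $I\in\mathcal V$, the space $P_k\mathcal H\subseteq P_{k-1}\mathcal H$ is orthogonal to $P_{k-1}h_{n_{k-1}}$, hence $P_kh_{n_{k-1}}=0$. Then $P_k/n_{k-1}\,\mathcal H\ominus P_k\mathcal VP_kh_{n_{k-1}}=P_k/n_{k-1}\,\mathcal H$ has finite codimension in $P_k\mathcal H$, so it lies in $[\mathcal F]$. Thus for every $k\ge 1$ the failure of (1) for $P_k$ is witnessed trivially and yields no index $n$ with $P_kh_n\neq 0$. Cutting off directions cannot help, since subprojections keep $h_{n_{k-1}}$ in the kernel. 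Any $Q\le P_{k-1}$ with $Q\mathcal H\perp e_{k-1}$ also kills $h_{n_{k-1}}$, so enlarging does not help either. As you use it, the negation of (1) does not even drive the recursion past the first step.

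For comparison, the paper finishes by taking the selective $P$ itself with $x_n=Ph_{k_n}$. That does not fill the hole either. The relations $P/(l+1)\le P_{l+1}$ and $P_{l+1}h_{k_l}=0$ force $Ph_{k_l}\in\Span\{h_1,\dots,h_{l+1}\}$, so $Ph_{k_l}=0$ as soon as $k_l\ge l+2$. The projection that selectivity delivers behaves like a pseudo-intersection of the $P_n$ and is essentially orthogonal to the selected vectors. Completing a proof along these lines needs two genuinely new ingredients: a way to obtain, at each stage, an index with $P_kh_n\neq 0$ and large orthocomplement; and a fusion principle that places (a finite change of) the projection onto $\overline{\Span}\{e_k\}$ in $\mathcal F$.
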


	\begin{proof}
	Assume that for every $P\in\mathcal F$, Condition (1) does not hold. Let us define a decreasing sequence $(P_n)$ in $\mathcal F$, and an increasing sequence $(k_n)$ of positive integers, as follows:
	
	Let $P_1 = I_{\mathcal H}\in\mathcal F$. Choose $k_1$ such  $P_1/k_1\mathcal{H}\ominus P_1\mathcal{V}P_1 h_{k_1}\in [\mathcal F]$. \\
	Let $P_2$ be the projection onto $P_1/k_1\mathcal{H}\ominus P_1\mathcal{V}P_1 h_{k_1}$. (That is, $P_2$ is the projection onto $\mathcal H\ominus \mathcal V  h_{k_1}$). By assumption, $P_2\in\mathcal F$. And $P_2\leq P_1$ by construction.
	
	Now, suppose $P_1\geq P_2\geq \dots \geq P_n$ in $\mathcal F$ and $k_1 < k_2, \dots < k_{n-1}$ have been defined such that 
	
	\[P_i/k_i\mathcal{H}\ominus P_i\mathcal{V}P_i h_{k_i}\in [\mathcal F],\] 
	
	for $1\leq i < n$, and $P_n$ is the projection onto $P_{n-1}/k_{n-1}\mathcal{H}\ominus P_{n-1}\mathcal{V}P_{n-1}h_{k_{n-1}}$. By assumption, there exists $k_{n}$ such that $P_n/k_{n}\mathcal{H}\ominus P_n\mathcal{V} P_n h_{k_{n}}$. Let $P_{n+1}$ be the projection onto $P_n/k_{n}\mathcal{H}\ominus P_n\mathcal{V} P_n h_{k_{n}}$. Then $P_{n+1}\leq P_n$ by construction and $P_{n+1}\in\mathcal F$. Note that, if necessary, by  repeating this step a finitely-many times we can assume $k_n> k_{n-1}$. This completes the definition of the sequences  $(P_n)$ and $(k_n)$ .
	
	By selectivity, there exists  $P \in \mathcal F$ such that,  for every $n\geq 1$, $P/n\leq P_n$. 
	
	Given $n\geq 1$, note that  \[P/k_n \leq P/n\leq P_n.\] In fact, 
	
	\[P/k_n \leq P_n/k_n.\]

	Let $x_n = Ph_{k_n}$. Then, for every $A\in\mathcal{V}$:
	
	\[\langle P A Px_{n-1},  x_n\rangle = \langle P A P h_{k_{n-1}}, P h_{k_n}\rangle = \langle P A P h_{k_{n-1}},  h_{k_n}\rangle = \langle P_{n-1} A P_{n-1} h_{k_{n-1}},  h_{k_{n}}\rangle = 0.\] 
	
	In fact, for all $l,m\geq 1$  with $ l < m$ and every $A\in\mathcal{V}$:
	
	\[\langle P A P x_l, x_m\rangle = \langle P A P h_{k_l}, Ph_{k_m}\rangle = \langle P_l A P_l h_{k_l}, P_l h_{k_m}\rangle = \langle P_l A P_l h_{k_l} , h_{k_m}\rangle = 0.\] 
	
	Now, using the fact that $\mathcal V$ is self-adjoint, we conclude that for all $l, m\geq 1$ and every $A\in\mathcal{V}$:
	
	\[\langle P A P x_l, x_m\rangle = 0.\]  So,  $P\mathcal{V}P$ is diagonalizable.

		\end{proof}

\begin{lem}\label{diminf}
If $\mathcal V\subseteq B(\mathcal H)$ is an operator system such that for every $n$, $\dim \mathcal V h_n = \infty$, then there exists a sequence $(A_n)$ in $\mathcal V$ and an orthonormal sequence $(x_n)$ in $\mathcal H$ such that for each $n$, $\langle A_n x_n, x_{n+1}\rangle = 1$ and  $\langle A_n x_i, x_j\rangle = 0$, for $i\neq j$ with $\max\{i,j\}>n+1$. 
\end{lem}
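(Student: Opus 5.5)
The plan is an inductive construction. Assume $x_1,\dots,x_m$ and $A_1,\dots,A_{m-1}$ have already been built. Then choose $A_m$, and after that $x_{m+1}$. All the required conditions involving index $m+1$ will be imposed at that stage.

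Fix a pair $i\neq j$ with $\max\{i,j\}=M$. The conditions $\langle A_k x_i,x_j\rangle=0$ are needed for every $k\le M-2$. If $j=M>i$, the condition says $x_M\perp A_kx_i$. If $i=M>j$, it says $\langle x_M, A_k^*x_j\rangle=0$. So it is enough that, at stage $M$, we pick $x_M$ orthogonal to the finite-dimensional space
\[F_M=\Span\{x_i,\;A_kx_i,\;A_k^*x_i : i<M,\ k\le M-2\}.\]
Orthogonality to the $x_i$ with $i<M$ also gives orthonormality. Pairs with both indices smaller than $M$ were handled at earlier stages, since their maximum is smaller.

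The remaining requirement is $\langle A_{M-1}x_{M-1},x_M\rangle=1$. Here the choice of $A_{M-1}$ is made before $x_M$. It needs $A_{M-1}x_{M-1}\notin F_M$, and $F_M$ does not depend on $A_{M-1}$. Given such an $A_{M-1}$, set $y$ to be the component of $A_{M-1}x_{M-1}$ orthogonal to $F_M$, which is nonzero, and take $x_M=y/\|y\|$. Then $\langle A_{M-1}x_{M-1},x_M\rangle=\|y\|$, and rescaling $A_{M-1}$ by $1/\|y\|$ gives the value $1$. Rescaling does not affect any of the vanishing conditions, because $A_{M-1}$ enters $F_M$ only through indices $k\le M-2$. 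It will enter $F_{M+1}$ only after this rescaling.

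So the whole induction reduces to the following invariant: at each stage, $\mathcal V x_{M-1}\not\subseteq F_M$. It suffices to keep $\dim\mathcal Vx_m$ larger than $\dim F_{m+1}$, which is bounded by a number $D_m$ known in advance (roughly $3m^2$). The start is easy: take $x_1=h_1$, since $\dim\mathcal Vh_1=\infty$.

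The main obstacle is to ensure that the newly chosen $x_M$ again satisfies $\dim\mathcal Vx_M>D_M$. The hypothesis only controls the basis vectors $h_n$, while $x_M$ is forced into $F_M^\perp$. I would handle this by exploiting freedom in the choice of $A_{M-1}$, or by replacing $x_M$ with a small perturbation.

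The first thing I would try uses two facts.
\begin{enumerate}
\item The set $\{v : \dim\mathcal Vv\ge D\}$ is open. This is lower semicontinuity of the rank of $v\mapsto(B_1v,\dots,B_Dv)$ for fixed $B_1,\dots,B_D\in\mathcal V$.
\item The set $S=\{v:\dim\mathcal Vv<\infty\}$ is a linear subspace, since $\mathcal V(v+w)\subseteq\mathcal Vv+\mathcal Vw$, and it contains no $h_n$.
\end{enumerate}
Since $h_n\to0$ weakly and $F_M$ is finite-dimensional, $P_{F_M}h_n\to0$ in norm. Hence $P_{F_M^\perp}h_n$ is arbitrarily close to $h_n$ for large $n$. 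Using openness around $h_n$ (with $D$ fixed and the $B_i$ chosen for $h_n$), I would look for a unit vector $x_M\in F_M^\perp$ with $\dim\mathcal Vx_M>D_M$. This vector must also still satisfy $\langle A_{M-1}x_{M-1},x_M\rangle\neq0$, which I would arrange by mixing it with $y$ and then rescaling $A_{M-1}$.

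The delicate point is uniformity. The openness neighbourhood depends on $n$, while the distance $\|P_{F_M}h_n\|$ must beat it. If this fails, the fallback is to exploit $\dim\mathcal Vh_n=\infty$ more cleverly. One option is to carry along auxiliary basis vectors $h_{n_m}$ and to choose each $x_M$ inside $\Span\{y,h_{n_M}\}\cap F_M^\perp$. Then one checks directly that $\mathcal Vx_M$ is large, using that $\mathcal Vh_{n_M}$ is infinite-dimensional modulo the finite-dimensional space $\mathcal Vy+\ldots$.
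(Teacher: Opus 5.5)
Your reduction is right: the conditions involving index $M$ amount to $x_M\perp F_M$ plus $\langle A_{M-1}x_{M-1},x_M\rangle=1$, the rescaling trick is legitimate, and everything hinges on the invariant $\mathcal Vx_{M-1}\not\subseteq F_M$. But the step you flag as the main obstacle is a genuine gap. No choice of $x_M$ can close it, because the statement is false as written: a condition on the basis vectors $h_n$ does not pass to the vectors the construction forces on you. Here is a counterexample.

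Take a unit vector $u$ with $\langle h_n,u\rangle\neq0$ for all $n$. Set $T_{w,z}v=\langle v,u\rangle w+\langle v,z\rangle u$, and let $\mathcal V=\{aI+T_{w,z}: a\in\mathbb C,\ w,z\in\mathcal H\}$. This is an operator system, since $T_{w,z}^*=T_{z,w}$, and $\mathcal Vh_n=\mathcal H$ for every $n$. Suppose $(A_n)$, $(x_n)$ are as in the statement, and write $A_n=a_nI+T_{w_n,z_n}$ and $\alpha_i=\langle x_i,u\rangle$. For $i\neq j$,
\[\langle A_nx_i,x_j\rangle=\alpha_i\langle w_n,x_j\rangle+\overline{\alpha_j}\langle x_i,z_n\rangle .\]
Since $\langle A_nx_n,x_{n+1}\rangle=1$, one of $\alpha_n,\alpha_{n+1}$ is nonzero, so $\alpha_i\neq0$ for infinitely many $i$. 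Fix $n$ and three such indices $p,q,r>n+1$. The vanishing of the off-diagonal entries among $p,q,r$ gives a constant $c$ with $\langle w_n,x_j\rangle=c\overline{\alpha_j}$ and $\langle x_i,z_n\rangle=-c\alpha_i$ for $i,j\in\{p,q,r\}$. Pairing an arbitrary index with one of $p,q,r$ (so the maximum exceeds $n+1$) extends both identities to all indices. Then $\langle A_nx_i,x_j\rangle=c\alpha_i\overline{\alpha_j}-c\alpha_i\overline{\alpha_j}=0$ for all $i\neq j$, contradicting $\langle A_nx_n,x_{n+1}\rangle=1$.

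In this example $\mathcal Vv=\Span\{v,u\}$ for every nonzero $v\perp u$, so your set $S$ is the closed hyperplane $u^\perp$. As soon as $u\in F_M$, every admissible $x_M$ lies in $u^\perp$ and $\mathcal Vx_M\subseteq F_{M+1}$, so the invariant dies. Your openness argument fails exactly as you feared: $P_{F_M^\perp}h_n\in u^\perp$ for all $n$, while the largest ball around $h_n$ avoiding $u^\perp$ has radius $|\langle h_n,u\rangle|\le\|P_{F_M}h_n\|$. Your fallback inside $\Span\{y,h_{n_M}\}\cap F_M^\perp$ fails for the same reason.

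The paper's one-line proof has the same defect. It asserts that $\dim\mathcal Vh_n=\infty$ for all $n$ implies $\dim\mathcal Vh=\infty$ for every nonzero $h$, which this example refutes, and then invokes Lemma 3.3 of \cite{kks}, which needs that stronger hypothesis. Under the stronger hypothesis your induction is complete with no obstacle, since every unit vector $x_M$ satisfies the invariant automatically. So the repair is to strengthen the hypothesis of the lemma, and then re-examine how it is obtained in Proposition \ref{no diag}, rather than to search for a cleverer $x_M$.
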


\begin{proof}
If $\dim \mathcal V h_n = \infty$ for every $n$, then $\dim \mathcal V h = \infty$ for every nonzero $h$ in $\mathcal H$. So the result follows from Lemma 3.3 of \cite{kks}.

\end{proof}


\begin{lem}\label{compress} Let $\mathcal F \subseteq\Pi_{\infty}(\mathcal H)$ a wide, selective family. Let $\mathcal V\subseteq B(\mathcal H)$ be an operator system, and let $(A_n)$ be a linearly independent sequence of operators in $\mathcal V$. Let $(N_n)$, with $N_n\geq 1$, be a strictly increasing sequence of integers such that $\langle A_n h_i, h_j\rangle=0$ if $\max\{i,j\}>N_n$ and $i\neq j$.  Suppose there is no nonzero finite-rank operators in $\Span\{A_n : n\geq 1\}$. Then there exists $P\in\mathcal F$ such that the sequence $(PA_n P)$ of compressions is linearly independent and for $n\geq 1$, $\langle A_n P h_i, P h_j\rangle=0$ if $\max\{i^3,j^3\}>n$ and $i\neq j$. 
\end{lem}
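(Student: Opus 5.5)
The plan is to take $P$ to be a diagonal projection $P=P^{\mathcal B}_A$ with $A\in\mathcal C$, where $\mathcal C$ is a coideal with $\mathcal F_{\mathcal C}\subseteq\mathcal F$ (given by wideness). The set $A$ is obtained by a fusion argument using the selectivity of $\mathcal F$ (Definition \ref{sele def}(1)), in the same spirit as the proof of Lemma \ref{one-two}.

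\textbf{Reading of the indices.} For diagonal $P$ we have $Ph_i\in\{h_i,\vec 0\}$, so the orthogonality condition only concerns pairs of indices in $A$. Taken literally, it would require $N_{i^3-1}<i$, which is impossible because $N_n\ge n$. I will therefore read it with respect to the enumeration $A=\{a_1<a_2<\cdots\}$ of the range basis of $P$: for $i\neq j$ with $\max\{i,j\}^3>n$ we want $\langle A_nh_{a_i},h_{a_j}\rangle=0$. By hypothesis this holds as soon as $\max\{a_i,a_j\}>N_n$. So it suffices to arrange the growth condition
\[ a_i>N_{i^3}\qquad\text{for every } i .\]
Since $N_n$ is increasing, this gives $a_{\max\{i,j\}}>N_{\max\{i,j\}^3}\ge N_n$ whenever $\max\{i,j\}^3>n$. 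Condition (3) of Definition \ref{sele def} (closure under finite changes), together with property (3) of coideals, lets us delete finitely many basis vectors at each stage, so the growth condition can be imposed step by step.

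\textbf{Linear independence.} Let $W_k=\Span\{A_1,\dots,A_k\}$. Any $B\in W_k$ has vanishing off-diagonal entries outside the $N_k\times N_k$ corner. So $B$ is a finite block plus a diagonal tail $(d_m(B))_{m>N_k}$. Because $B\ne 0$ is not finite rank, infinitely many $d_m(B)$ are nonzero. The compression $P_ABP_A$ is nonzero exactly when either the finite block survives on $A$ or some $d_m(B)\ne 0$ with $m\in A$. Thus $(P_AA_nP_A)$ is linearly independent iff for every $k$ the linear map $B\mapsto (\langle Bh_a,h_a\rangle)_{a\in A}$, together with the finite block, is injective on $W_k$. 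Since $W_k$ is finite-dimensional, this is witnessed by finitely many indices $a\in A$.

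\textbf{Construction.} Build $a_1<a_2<\cdots$ and a decreasing sequence $P_k=P^{\mathcal B}_{A^{(k)}}$ in $\mathcal F_{\mathcal C}$ so that $\{a_1,\dots,a_{m_k}\}$ already witnesses injectivity on $W_k$. All later points are taken from $A^{(k)}$ and beyond $N_{i^3}$. Then apply selectivity of $\mathcal C$ (equivalently of $\mathcal F$) to get a diagonalizing $A\in\mathcal C$ with $A/a\subseteq A^{(k)}$. Interleaving with the fixed finite witnesses, using closure under finite changes, gives $P=P^{\mathcal B}_A\in\mathcal F$ with both properties.

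\textbf{Main obstacle.} The hard step is choosing $A^{(k)}\in\mathcal C$ on which the diagonal functionals $d_m$, $m\in A^{(k)}$, separate $W_k$. A set in the coideal could lie inside the zero set $\{m: d_m(B)=0\}$ of some nonzero $B$.

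To handle this I would first try a dimension-minimization argument. Among $A'\subseteq A^{(k-1)}$ with $A'\in\mathcal C$, choose one minimizing the dimension of the common annihilator $K(A')\subseteq W_k$ of $\{d_m: m\in A'\}$. This is possible by property (2) of coideals, splitting $A'$ according to the finitely many possible annihilators. If $K(A')\ne\{0\}$, take a nonzero $B\in K(A')$. Its compression to $A'$ then reduces to a finite block. In that case, adjoin finitely many indices (allowed by closure under finite changes) to restore injectivity via the finite block, using that $B$ is not finite rank on all of $\mathcal H$. If this fails, I would instead relax to choosing $P$ non-diagonal, rotating finitely many basis vectors inside $P\mathcal H$ as in the proof of Lemma \ref{wide}.
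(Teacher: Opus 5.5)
Your route is the paper's. Your growth condition $a_i>N_{i^3}$ is the paper's choice $M_k>N_{k^3}$. Your enumeration reading is what the paper means by ``$P_kh_k=h_{M_k}$'' and ``$Ph_i=P_ih_i$''. Both arguments rest on the same fact: a nonzero element of $\Span\{A_1,\dots,A_k\}$ has no off-diagonal entries beyond $N_k$ and is not of finite rank, so it has nonzero diagonal entries at arbitrarily large indices.

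However, the proposal never proves that $(PA_nP)$ is linearly independent. It stops at the ``main obstacle'', and the suggested repair does not work:
\begin{enumerate}
\item Because $A''\subseteq A'$ implies $K(A'')\supseteq K(A')$, minimizing $\dim K(A')$ over $\mathcal C$-subsets of $A^{(k-1)}$ is vacuous: $A^{(k-1)}$ is itself a minimizer. It gives no stability under the further shrinking the fusion performs.
\item The possible annihilators are not finitely many; only their dimensions are.
\item Adjoining indices outside $A^{(k-1)}$ to the tail destroys the monotonicity $A^{(k)}\subseteq A^{(k-1)}$ that condition (1) of selectivity requires.
\end{enumerate}
The fallback to a non-diagonal $P$ is not developed.

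The missing idea, which drives the paper's construction, is to handle one operator at a time and to stop drawing witnesses from the tails. Your final set is ``diagonalizing set plus witnesses'' anyway, so nothing forces them into $A^{(k)}$. Suppose the compressions of $A_1,\dots,A_{k-1}$ to a finite index set $S$ are independent. Then they remain independent on every $S'\supseteq S$, since the entries of $P^{\mathcal B}_SBP^{\mathcal B}_S$ are among those of $P^{\mathcal B}_{S'}BP^{\mathcal B}_{S'}$. If the compression of $A_k$ to $S$ lies in their span, the representing $A\in\Span\{A_1,\dots,A_{k-1}\}$ is unique. Choose $M_k$ as large as you like with $\langle (A-A_k)h_{M_k},h_{M_k}\rangle\neq 0$ and pass to $S\cup\{M_k\}$. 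Restricting a dependence to $S$ shows it is either trivial or a nonzero multiple of $A-A_k$, and the $(M_k,M_k)$ entry excludes the latter. So one freely chosen index per stage suffices, and the situation you feared (a $\mathcal C$-set inside the zero set of a diagonal) never arises.

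What must then be justified is that these infinitely many witnesses sit inside a member of $\mathcal F$ without spoiling $a_i>N_{i^3}$. Closure under finite changes cannot do this. In your diagonal setting, take $B=\{b_1<b_2<\cdots\}\in\mathcal C$ with $b_j>N_{8j^3}$, and choose $M_k>\max\{M_{k-1},b_k,N_{8k^3}\}$. Then $A=B\cup\{M_k:k\ge 1\}\in\mathcal C$ by upward closure. In $A$, $b_j$ has position at most $2j-1$, and $M_k$ has position at most $j+k$, where $b_j\le M_k<b_{j+1}$. Using $M_k\ge b_j$ when $j\ge k$ and $M_k>N_{8k^3}$ when $j<k$, this gives $a_i>N_{i^3}$.

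Such a $B$ exists when $\mathcal C$ is a selective coideal. But ``selectivity of $\mathcal C$ (equivalently of $\mathcal F$)'' is not a hypothesis. The coideal supplied by wideness may be the coideal of sets of positive upper density (e.g.\ for $\mathcal F=\Pi_\infty(\mathcal H)$), which contains no set growing that fast. Selectivity of $\mathcal F$ yields projections in $\mathcal F$, not sets in $\mathcal C$. If $\min A^{(n)}>n$, a fusion $P$ with $P/n\le P^{\mathcal B}_{A^{(n)}}$ is in fact automatically diagonal: each $Ph_k$ is supported in $[k,\infty)$ and $P$ is self-adjoint. But its index set need not lie in $\mathcal C$, and $\mathcal F$ is not assumed upward closed, so the witnesses still cannot simply be adjoined. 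You have to supply this link explicitly; the paper itself only asserts $Ph_i=P_ih_i$ for the fusion.
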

\begin{proof}

 Since $A_1$ has infinite rank, there exists $M_1>N_1$ such that $\langle A_n h_{M_1}, h_{M_1}\rangle=0$. Since  $\mathcal F$ is wide and closed under finite changes, we can choose $P_1\in\mathcal F$ be such that $P_1 h_1 = h_{M_1}$. For $k\geq 2$, suppose $M_1, \dots, M_{k-1} \in \mathbb N$ and $P_1\geq P_2\geq  \dots \geq P_{k-1}$ in $\mathcal F$ have been chosen such that $\{P_i A_1P_i, \dots P_i A_{k-1}P_i\}$ is linearly independent, for all $i\in\{1, \dots, k-1\}$.

If there is an operator $A\in\Span\{A_1, \dots, A_{k-1}\}$ such that $P_{k-1} AP_{k-1} = P_{k-1} A_kP_{k-1}$, then, since $\{P_{k-1} A_1P_{k-1}, \dots P_{k-1} A_{k-1}P_{k-1}\}$ is linearly independent, $A$ is unique. Also, since $\{A_1, \dots, A_k\}$ is linearly independent, $A-A_k$ has infinite rank. So there is $M_k>N_{k^3}$ such that $\langle (A-A_k) h_{M_k}, h_{M_k}\rangle\neq 0$. Then, $\langle A h_{M_k}, h_{M_k}\rangle\neq \langle A_k h_{M_k}, h_{M_k}\rangle$. Otherwise, if no such $A\in\Span\{A_1, \dots, A_{k-1}\}$ exists, choose any $M_k>N_{k^3}$. In either case, let $P_k\in\mathcal F$ with $P_k\leq P_{k-1}$ be such that $P_k h_k = h_{M_k}$. It follows that $\{P_ikA_1P_k, \dots P_k A_kP_k\}$ is linearly independent. This completes the definition of the sequence $(P_n)$. Using selectivity, choose $P\in\mathcal F$ such that $P/n\leq P_n$, for $n\geq 1$. 

For $i, j, n\geq 1$ with $\max\{i^3,j^3\}>n$ and $i\neq j$, we have the following:

\[\mbox{Either } M_i>N_{i^3}>N_n\ \mbox{or } M_i>N_{j^3}>N_n.\] 

Hence

\[ \langle A_n Ph_i, Ph_j\rangle = \langle A_n P_i h_i, P_jh_j\rangle = \langle A_n h_{M_i}, h_{M_i}\rangle = 0.\]

\end{proof}


\section{Wide selective families and the generalized quantum Ramsey theorem}\label{main}


\begin{prop}\label{no diag}
Let $\mathcal F \subseteq\Pi_{\infty}(\mathcal H)$ be a wide, selective family. If $\mathcal V\subseteq B(\mathcal H)$ is an operator system such that there is no $P\in\mathcal F$ for which $P\mathcal V P$ is diagonalizable, then  either $\mathcal V$ has an infinite quantum clique in $\mathcal F$ or there exists $P\in\mathcal P$ and an operator system $\mathcal V'\subseteq\mathcal V$ such that the compression $P\mathcal V' P$ is infinite-dimensional and diagonalizable.
\end{prop}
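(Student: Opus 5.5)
Since there is no $P\in\mathcal F$ for which $P\mathcal VP$ is diagonalizable, Lemma \ref{one-two} provides $P_0\in\mathcal F$ such that, for every $n$, the subspace $P_0/n\mathcal H\ominus P_0\mathcal VP_0h_n$ is not in $[\mathcal F]$. I will use this to show that the compressed system $\mathcal V_0:=P_0\mathcal VP_0$, viewed on $P_0\mathcal H$, moves every basis vector into an infinite-dimensional space. If $\dim \mathcal V_0 h_n<\infty$ for some $n$, then $P_0/n\mathcal H$ differs from $P_0/n\mathcal H\ominus \mathcal V_0h_n$ only by a finite-dimensional subspace. Closure under finite changes (Definition \ref{sele def}(3)) together with $P_0/n\in\mathcal F$ would then put the latter in $[\mathcal F]$, which is a contradiction. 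To get $P_0/n\in\mathcal F$, I note that $P_0/n$ differs from $P_0$ by a finite-dimensional piece, so it lies in $\mathcal F$ again by (3). Hence $\dim \mathcal V_0 h=\infty$ for all nonzero $h$ in the relevant span, after re-indexing the basis of $P_0\mathcal H$, which I may do using wideness exactly as in the proof of Lemma \ref{wide}.

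Next, I apply Lemma \ref{diminf} to $\mathcal V_0$. This gives operators $A_n\in\mathcal V$ and an orthonormal sequence $(x_n)$ with $\langle P_0A_nP_0x_n,x_{n+1}\rangle=1$ and $\langle P_0A_nP_0x_i,x_j\rangle=0$ for $i\ne j$ with $\max\{i,j\}>n+1$. The $A_n$ are linearly independent, since the pairings single out the index $n$. I then split into two cases according to whether $\Span\{A_n\}$ contains a nonzero finite-rank operator modulo the compression.

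\emph{Case 1: a clique.} Following \cite{kks}, define an operator $T:\mathcal H\to\ell^2(\mathbb N)$ by sending $x_n$ to $e_n$ and the orthocomplement of $\{x_n\}$ to $0$. The sequence $(A_n)$ realizes all the matrix units $e_{n+1}e_n^*$ up to lower-order corrections, and symmetry of $\mathcal V$ supplies the adjoints. Together with $I\in\mathcal V$, these generate a weakly dense subset, so $T\mathcal VT^*$ is weakly dense in $B(\ell^2)$. Lemma \ref{wide} then yields an infinite quantum clique in $\mathcal F$. I expect this case to need a triangularity argument to kill the finitely many extra entries. If that argument fails, I would fall back to an almost-diagonal configuration, which Case 2 handles.

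\emph{Case 2: an infinite-dimensional diagonalizable compression.} Suppose the span has no nonzero finite-rank operators and the band-structure hypothesis of Lemma \ref{compress} holds in the basis $(x_n)$. After a unitary change of basis onto a wide-family basis, as in Lemma \ref{wide}, Lemma \ref{compress} gives $P\in\mathcal F$ such that the compressions $PA_nP$ are linearly independent and $\langle A_nPh_i,Ph_j\rangle=0$ whenever $\max\{i^3,j^3\}>n$ and $i\neq j$. Set $\mathcal V'=\Span(\{I\}\cup\{A_n,A_n^*\})\subseteq\mathcal V$; this is an operator system. Then $P\mathcal V'P$ is infinite-dimensional by linear independence.

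The main obstacle is checking that $P\mathcal V'P$ is genuinely diagonal, since Lemma \ref{compress} only controls the entries with $\max\{i^3,j^3\}>n$. My plan is to pass to a further selective subsequence. Using Definition \ref{sele def}(1) once more, I diagonalize along $P$ so that each $A_n$ sees only indices beyond the scale $n^{1/3}$, where its off-diagonal entries vanish. If that step fails, I would instead take $\mathcal V'$ to consist of the operators with vanishing off-diagonal finite corner.
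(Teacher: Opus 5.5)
Your first step is fine. Deriving $\dim P_0\mathcal VP_0h_n=\infty$ from closure under finite changes is a legitimate alternative to the paper's appeal to Definition \ref{sele def}(2), since $P_0/n$ differs from $P_0$ by at most $n$ dimensions.

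\textbf{Gap in Case 1.} The paper's dichotomy is whether \emph{every tail} $\Span\{A_n : n\geq m\}$ contains a nonzero finite-rank operator. In that case, these finite-rank operators are exactly what drive Lemmas 3.4, 3.5 and 2.3 of \cite{kks}. They produce an isometry $V:\ell^2(\mathbb N)\to\mathcal H\oplus\mathcal H$ with $V^*\mathcal VV$ weakly dense, and then $T(h)=V^*(h,0)$ together with Lemma \ref{wide} puts the clique in $\mathcal F$. Your Case 1 uses no finite-rank hypothesis at all. Your $T$ is the partial isometry $x_n\mapsto e_n$, i.e.\ you merely compress $\mathcal V$ to $\overline{\Span}\{x_n\}$. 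You then infer density from the $A_n$ ``realizing'' the units $e_{n+1}e_n^*$. But $T\mathcal VT^*$ is only a linear space, so nothing is generated by products. Indeed, $I$, the $e_{n+1}e_n^*$ and their adjoints span a subspace of the weakly closed space of tridiagonal operators. Weak density forces every $N\times N$ corner of $T\mathcal VT^*$ to equal $M_N$. The band condition of Lemma \ref{diminf} only constrains entries with $\max\{i,j\}>n+1$, so it says nothing about those corners. Nor can you ``fall back'' to Case 2. Case 2 needs some tail span free of nonzero finite-rank operators, which is exactly what fails in Case 1 of the correct dichotomy.

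\textbf{Gap in Case 2.} Your primary plan cannot work. Take any further compression onto $\overline{\Span}\{Ph_i : i\in B\}$. The entries $\langle A_nPh_i,Ph_j\rangle$ with $i\neq j$ in $B\cap[1,n^{1/3}]$ are unconstrained by Lemma \ref{compress}, and this set has at least two elements once $n$ is large. So in general no single compression diagonalizes all the $A_n$, and $\Span(\{I\}\cup\{A_n,A_n^*\})$ is the wrong $\mathcal V'$. The missing step is a dimension count. This is the step the paper delegates to Lemma 3.7 of \cite{kks}, and it is what the exponent $3$ in Lemma \ref{compress} is for.
\begin{itemize}
\item For $n\leq k^3$, the off-diagonal entries of $PA_nP$ lie in the $k\times k$ corner, which has $k^2-k$ off-diagonal positions.
\item Hence the subspace $W_k\subseteq\Span\{A_1,\dots,A_{k^3}\}$ of operators whose compression is diagonal has dimension at least $k^3-k^2+k$.
\item Since $A\mapsto PAP$ is injective on $\Span\{A_n\}$, the increasing spaces $PW_kP$ have unbounded dimension.
\item Therefore $\mathcal V'=\Span(\{I\}\cup\bigcup_k(W_k\cup W_k^*))$ is an operator system with $P\mathcal V'P$ infinite-dimensional and diagonal.
\end{itemize}
Your fallback, ``operators with vanishing off-diagonal finite corner'', points in this direction. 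However, you give no reason that this space is infinite-dimensional, and that is the whole content of the step.
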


\begin{proof}
 By Lemma \ref{one-two} there exists  $P\in \mathcal F$ such that for every $n$,  $P\mathcal{H}\ominus P\mathcal V P h_n\notin [\mathcal F]$. By identifying $\mathcal V$ with $P\mathcal V P$, we may assume that for every $n$, $P\mathcal H\ominus \mathcal V h_n\notin [\mathcal F]$. Therefore, by Definition \ref{sele def} (2), for every $n$, $\mathcal V h_n \in [\mathcal F]$. In particular, $\dim \mathcal V h_n = \infty$, for every $n\in\mathbb N$. Then, by Lemma \ref{diminf}, there exists a sequence $(A_n)$ in $\mathcal V$ and an orthonormal sequence $(x_n)$ in $\mathcal H$ such that for each $n$, $\langle A_n x_n, x_{n+1}\rangle = 1$ and  $\langle A_n x_i, x_j\rangle = 0$, for $i\neq j$ with $\max\{i,j\}>n+1$. 
 
 Suppose that for all $m\in\mathbb N$ there is a nonzero finite-rank operator in $\Span\{A_n : n\geq m\}$. Following the same procedure described in Lemmas 3.4, 3.5 and 2.3 of \cite{kks}, we obtain an isometry $V: \ell^2(\mathbb N)\rightarrow \mathcal H\oplus \mathcal H$ such that $V^*\mathcal V V$ is weakly dense in $B(\ell^2(\mathbb N))$. Define $T: \mathcal H\rightarrow \ell^2(\mathbb N)$ by $T(h) = V^*(h,0)$, for $h\in\mathcal H$. Since $\mathcal F$ is wide, by Lemma \ref{wide}, we can assume that the projection $P_{\ker(T)^{\perp}}$, onto $\ker(T)^{\perp}$, is an element of $\mathcal F$. Notice that  $T\mathcal V T^* = V^*(\mathcal V\oplus 0_{\mathcal H}) V$ is weakly dense in $B(\ell^2(\mathbb N))$. So $P_{\ker(T)^{\perp}}$ is an infinite quantum clique for $\mathcal V$ in  $\mathcal F$. 
 Otherwise, applying Lemma \ref{compress} above and Lemma 3.7 of \cite{kks} (truncating $\{A_n : n\in\mathbb N\}$, if necessary) we obtain $P\in\mathcal F$ and $\mathcal V'\subseteq\mathcal V$ such that the compression $P\mathcal V' P$ is infinite-dimensional and diagonalizable. This concludes the proof.

\end{proof}

	\begin{lem}[Infinite-dimensional case of Proposition 4.5 \cite{kks}, selective version]\label{clique}
 If $\mathcal V\subseteq B(\mathcal H)$ is an infinite-dimensional, diagonalizable (by the fixed orthonormal basis $\mathfrak B = (h_n)$) operator system, and  $\mathcal F \subseteq\Pi_{\infty}(\mathcal H)$ is a wide, selective family, then  $\mathcal V$ has an infinite quantum clique in $\mathcal F$.
\end{lem}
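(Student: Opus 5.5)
Since $\mathcal V$ is diagonalizable by $\mathfrak B$, every $A\in\mathcal V$ acts as multiplication by a bounded sequence $a=(a(n))_n$, with $A h_n = a(n)h_n$. So $\mathcal V$ is an infinite-dimensional, unital, self-adjoint subspace of $\ell^\infty(\mathbb N)$. The plan follows the proof of Proposition 4.5 of \cite{kks} in its infinite-dimensional case. There one finds a sequence of vectors built from blocks of basis vectors, and an isometry $V:\ell^2(\mathbb N)\to\mathcal H$ such that $V^*\mathcal V V$ is weakly dense in $B(\ell^2(\mathbb N))$. We then take $T=V^*$, so that $T\mathcal V T^*$ is weakly dense. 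Lemma \ref{wide} then turns this into an infinite quantum clique lying in $\mathcal F$. Wideness is only used at this last step. Selectivity is used earlier, to keep the blocks of basis vectors we select compatible with $\mathcal F$ while we pass to subsequences.

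The first step is to choose linearly independent self-adjoint $A_1,A_2,\dots\in\mathcal V$, with diagonal sequences $a_1,a_2,\dots$. Infinite dimensionality, together with $\mathcal V=\mathcal V^*$, allows this. By a Gram--Schmidt/pivot argument on finitely many coordinates, we arrange pairwise disjoint finite sets $F_1<F_2<\cdots$ of indices such that, restricted to $F_k$, the vectors $a_1,\dots,a_k$ together with the constant $1$ are linearly independent. We build this inductively, choosing $F_k$ beyond $\max F_{k-1}$. Linear independence of $\{I,A_1,\dots,A_k\}$ on a tail of $\mathbb N$ follows from the fact that a linear relation failing on a cofinite set would force a nonzero finite-rank operator in $\mathcal V$. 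If $\mathcal V$ contains such operators, we first compress them away using Definition \ref{sele def}(3). At stage $k$, the construction is carried out inside a projection $P_k\in\mathcal F$ with $P_k\le P_{k-1}$, obtained by finite changes. Definition \ref{sele def}(1) then supplies $P\in\mathcal F$ with $P/n\le P_n$. This guarantees that the block-wise choices are realized by basis vectors in the range of a single member of $\mathcal F$, as in Lemma \ref{compress}.

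The second step is the \cite{kks} construction. For a pair $(i,j)$ of positions, we use the independence on the blocks to pick unit vectors $u_i$ supported on $F_i$ and combinations $B\in\Span\{I,A_1,\dots,A_k\}$ such that the $2\times 2$ compression of $B$ to the span of $u_i,u_j$ approximates an off-diagonal matrix unit $e_{ij}$. Diagonal operators only produce diagonal entries in the basis $(h_n)$, so the $u_i$ must mix basis vectors. The intended choice is $u_i=(h_{p}+h_{q})/\sqrt2$ with $p,q$ from different blocks, so that a diagonal operator has off-diagonal entry $(a(p)-a(q))/2$ relative to such vectors. Enumerating all matrix units $e_{ij}$ and dovetailing them over the blocks gives an orthonormal sequence $(u_i)$. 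Letting $V e_i=u_i$, we get that $V^*\mathcal V V$ contains, modulo controlled errors, every finite matrix. This yields weak density.

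The main obstacle is exactly this second step. Diagonal operators restricted to the span of orthonormal vectors $u_i$ must generate all of $B(\ell^2)$ weakly, while the supports used still come from a single projection in $\mathcal F$ rather than from arbitrary ones. I would try to simply take over the \cite{kks} estimates verbatim and then apply Lemma \ref{wide} with $T=V^*$. Lemma \ref{wide} absorbs the mismatch between $P_{\ker(T)^\perp}$ and $\mathcal F$ by a unitary relabeling of the basis, so that only wideness is needed at the end.
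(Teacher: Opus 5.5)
Your reduction has the same endgame as the paper's proof. A construction in the style of \cite{kks} produces $T$ with $T\mathcal VT^*$ weakly dense in $B(\ell^2(\mathbb N))$, and Lemma \ref{wide} then supplies the clique in $\mathcal F$. The gap is in Step 2, which you yourself flag as the crux: the mechanism you describe cannot work in the setting you chose ($V$ an isometry into $\mathcal H$, $Ve_i=u_i$ orthonormal). If $u_i$ is supported on $F_i$ and the $F_i$ are pairwise disjoint, then $\langle Bu_i,u_j\rangle=0$ for every diagonal $B$ and all $i\neq j$, so no off-diagonal matrix unit is ever approximated. Two-point vectors $(h_p+h_q)/\sqrt2$ cannot be dovetailed either. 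Two orthogonal vectors coupled by some diagonal operator must share at least two indices: if they share exactly one index $m$, then $\langle u,v\rangle=u(m)\overline{v(m)}\neq0$. So pairwise-coupled orthonormal vectors with two-point supports all have the same support, and there are at most two of them, never an infinite sequence whose compressions fill every $M_n$. The paper does not attempt this step by hand. It builds exactly the input of Lemma 4.4 of \cite{kks}: vectors $x_k=Ph_k$ and $C_k\in\Span\{A_1,\dots,A_k\}$ with $\langle C_kx_k,x_k\rangle=1$ and $\langle C_kx_i,x_i\rangle=0$ for $i<k$. These come from choosing $B_k$ to vanish at the earlier positions, then a new position where it does not, then normalizing. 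That lemma then produces an isometry $V:\ell^2(\mathbb N)\to\mathcal H\oplus\mathcal H$, and the paper sets $T(h)=V^*(h,0)$. The relevant vectors $T^*e_i\in\mathcal H$ therefore need not be orthogonal, which relaxes precisely the constraint behind the obstruction above. Your Step 1 yields local independence on blocks, not this triangular normalization. You would still have to produce it before ``taking over the estimates verbatim'' makes sense.

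Two further points. First, one finite change (Definition \ref{sele def}(3)) cannot compress away the finite-rank elements of $\mathcal V$ when there are infinitely many independent ones, e.g.\ when $\mathcal V$ contains the rank-one projections onto all the $h_n$. Instead, choose each new element adaptively. The elements of $\mathcal V$ vanishing on $h_n$ for all $n>N$ form a space of dimension at most $N$, so $\mathcal V$ restricted to every tail is still infinite-dimensional. Second, the final clique comes from Lemma \ref{wide} alone, so your selective bookkeeping never reaches it, just as the paper's $P$ is dropped at (1)$'$. Relative to the paper's lemmas, citing the infinite-dimensional case of Proposition 4.5 of \cite{kks} and then applying Lemma \ref{wide} would already be the whole argument. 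But that puts all the weight on Lemma \ref{wide}, and the relabeling argument behind it does not do what is claimed. The unitary $U$ conjugates $\mathcal V$ as well as the projection, so $U^{-1}PU$ is a clique for $U^{-1}\mathcal VU$ rather than for $\mathcal V$. In fact wideness alone cannot suffice. Take the wide family of all diagonal infinite-rank projections, and let $\mathcal V$ be the algebra of all $\mathfrak B$-diagonal operators. Put $Th_m=2^{-m}v_m$, where $(v_m)$ enumerates the vectors $e_i$, $e_i+e_j$, $e_i+\sqrt{-1}\,e_j$ built from the standard basis $(e_i)$ of $\ell^2(\mathbb N)$. Then $T\mathcal VT^*$ contains every finitely supported matrix, yet every diagonal compression of $\mathcal V$ is the weakly closed, proper diagonal algebra of its range. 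This defect is inherited from the paper rather than introduced by you. It does mean that selectivity would have to be used in the final step, not only in the first.
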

\begin{proof}

Let $(A_n)$ be a linearly independent sequence of operators in  $\mathcal V$, simultaneously diagonalized by $\mathfrak B$ . We're going to follow a procedure similar to the main idea in the proof of Lemmas 4.3 and 4.4 in \cite{kks}.
\medskip

Let $P_1\in\mathcal F$ be a projection such that $\langle A_1 P_1h_1, P_1h_1 \rangle\neq 0$ (note that this is possible because $\mathcal F$ is closed under finite changes), and let $B_1 = A_1$.

Take $B_2\in\Span\{A_1, A_2\}$ such that  $\langle B_2 P_1h_1, P_1h_1 \rangle  = 0$. Choose $P_2\leq P_1$ in $\mathcal F$ such that $P_2h_2\perp P_1h_1$ and  $\langle B_2 P_2h_2, P_2h_2 \rangle\neq 0$ (again, this is possible because $\mathcal F$ is closed under finite changes). We proceed inductively. For $k> 1$, suppose we have defined $P_k\in\mathcal F$, $B_k\in \mathcal V$ such that:

\begin{enumerate}
\item $P_k\leq P_{k-1}$.
\item $B_k\in\Span\{A_1, \dots, A_k\}$.
\item $\langle B_k P_kh_k, P_kh_k \rangle \neq 0$.
\item $\langle B_k P_ih_i, P_ih_i \rangle = 0$ for $1\leq i< k$.
\item $P_kh_k \in \mathcal H\ominus\Span\{P_i h_i : 1\leq i<k\}$.
\end{enumerate}

Take $B_{k+1} \in\Span\{A_1, A_2, \dots A_k\}$ such that $\langle B_{k+1} P_ih_i, P_ih_i\rangle = 0$ for $1\leq i \leq k$. Let $P_{k+1}\leq P_k$ in $\mathcal F$ such that $P_{k+1}h_{k+1} \in \mathcal H\ominus\Span\{P_i h_i : 1\leq i\leq k\}$ and $\langle B_{k+1} P_{k+1}h_{k+1}, P_{k+1}h_{k+1} \rangle\neq 0$. This completes the construction. 

By selectivity, there exists $P \in \mathcal F$ such that $P/k\leq P_k$, for every $k\geq 1$. Then,

\[\mbox{for } i<k,\  \langle B_k P h_i, P h_i \rangle = \langle B_k P_i h_i, P_i h_i \rangle =  0,\] 
 \[\mbox{and } \langle B_k P h_k, P h_k \rangle = \langle B_k P_k h_k, P_k h_k \rangle  \neq 0.\]

Now, for each $k$,  let $x_k = P h_k $ and let $C_k = B_k/\langle B_k x_k, x_k  \rangle$. So, we get the following:

\begin{enumerate}
\item The sequence $(PC_k P)$ is linearly independent in $P\mathcal V P$ and simultaneously diagonalizable by $(x_k)$.
\item For $k\geq 1$, $\langle C_k x_k, x_k \rangle = 1$.
\item For $k\geq 2$,  $\langle C_k x_i, x_i \rangle = 0$ for $1\leq i< k$.
\end{enumerate}

Without loss of generality, we can replace condition (1) by:

\begin{enumerate}
\item[{(1)'}] The sequence $(C_k )$ is linearly independent in $\mathcal V$ and simultaneously diagonalizable by $(x_k)$.
\end{enumerate}

Now, the proof can be completed by following the exact same steps in the proof of Lemma 4.4  of \cite{kks} applied to $\mathcal V$, $(C_k )$ and $(x_k)$, and using the fact that $\mathcal F$ is a wide:

Proceeding as in  Lemma 4.4  of \cite{kks}, we can show that there is an isometry $V: \ell^2(\mathbb N)\rightarrow \mathcal H\oplus \mathcal H$ such that $V^*\mathcal V V$ is weakly dense in $B(\ell^2(\mathbb N))$. Now, define $T: \mathcal H\rightarrow \ell^2(\mathbb N)$ by $T(h) = V^*(h,0)$, for $h\in\mathcal H$. By Lemma \ref{wide}, we can assume that  the projection $P_{\ker(T)^{\perp}}$, onto $\ker(T)^{\perp}$, is an element of $\mathcal F$. Notice that  $T\mathcal V T^* = V^*(\mathcal V\oplus 0_{\mathcal H}) V$ is weakly dense in $B(\ell^2(\mathbb N))$. So $P_{\ker(T)^{\perp}}$ is an infinite quantum clique for $\mathcal V$ in  $\mathcal F$. This concludes the proof.

\end{proof}

\begin{thm}\label{selective is Ramsey} (A generalization of the infinite quantum Ramsey theorem of \cite{kks}). Let $\mathcal F \subseteq\Pi_{\infty}(\mathcal H)$  be a family of infinite rank projections. If $\mathcal F$ is wide and selective, then it is Ramsey. 
\end{thm}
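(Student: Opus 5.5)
Given an operator system $\mathcal V\subseteq B(\mathcal H)$, I will produce $P\in\mathcal F$ that is an infinite quantum clique, anticlique or obstruction for $\mathcal V$. The split is on whether some compression of $\mathcal V$ by an element of $\mathcal F$ is diagonalizable. The reduction I will rely on throughout is that if $Q\in\mathcal F$ and $Q'$ is a clique, anticlique or obstruction for $Q\mathcal V Q$ with $Q'\le Q$, then $Q'$ is the same kind of object for $\mathcal V$. This holds because $Q'\mathcal V Q'=Q'(Q\mathcal V Q)Q'$. So it suffices to work inside some $Q\in\mathcal F$.

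\emph{Case 1: no $P\in\mathcal F$ has $P\mathcal V P$ diagonalizable.} Proposition \ref{no diag} then gives either an infinite quantum clique in $\mathcal F$, in which case we are done, or some $P\in\mathcal F$ and an operator system $\mathcal V'\subseteq\mathcal V$ with $P\mathcal V'P$ infinite-dimensional and diagonalizable. In the latter case I will conjugate the diagonalizing basis of $P\mathcal H$ to the fixed basis $\mathfrak B$, using the unitary-transport argument from the proof of Lemma \ref{wide}, which is where wideness enters. I will then apply Lemma \ref{clique} to obtain $Q\in\mathcal F$ that is an infinite quantum clique for $P\mathcal V'P$, with $Q\le P$. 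Since $\mathcal V'\subseteq\mathcal V$, the set $Q\mathcal V Q$ contains $Q\mathcal V'Q$, so it is also weakly dense in $QB(\mathcal H)Q$, and $Q$ is a clique for $\mathcal V$.

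\emph{Case 2: some $P\in\mathcal F$ has $P\mathcal V P$ diagonalizable.} If $P\mathcal V P$ is infinite-dimensional, Lemma \ref{clique}, applied after the same basis transport, gives a clique in $\mathcal F$ below $P$, and hence a clique for $\mathcal V$. So assume $P\mathcal V P$ is finite-dimensional and diagonal with respect to an orthonormal basis $(x_n)$ of $P\mathcal H$. Each operator in $P\mathcal V P$ then corresponds to a bounded diagonal sequence, and $P\mathcal V P$ becomes a finite-dimensional unital self-adjoint space of sequences. I will mimic the finite-dimensional case of Proposition 4.5 of \cite{kks}. Using selectivity (1) together with closure under finite changes, I will pass successively to sub-projections in $\mathcal F$ on which each of finitely many basis elements $D_1,\dots,D_r$ either has constant diagonal or has diagonal converging to a limit. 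This is the diagonal analogue of extracting convergent subsequences, with the selective fusion producing a single $Q\in\mathcal F$. If every $D_i$ becomes constant, $Q\mathcal V Q=\mathbb C Q$ and $Q$ is an anticlique. Otherwise every element of $Q\mathcal V Q$ is a scalar multiple of $Q$ plus a compact diagonal operator. Thinning further so that the diagonal entries of the non-scalar part are nonzero yields a self-adjoint compact operator with dense range, so $Q$ is an obstruction.

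The main obstacle is this last step. Thinning is needed so that the limits exist, and this has to be done within $\mathcal F$, whose elements are not all diagonal in $(x_n)$. Selectivity (2) will be used to decide at each stage between the span of the $x_n$ where a diagonal lies in one half of a bisected interval and its complement. Iterating these choices and fusing them with selectivity (1), as in Lemma \ref{compress}, should give the required $Q$. A second concern is the wideness transport, namely making sure the conjugated compression still lies in $\mathcal F$. For that I will rely on the unitary $U$ built just before Definition \ref{wide def}.
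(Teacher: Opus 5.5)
Your clique cases (Case 1, and Case 2 with $P\mathcal V P$ infinite-dimensional) follow the paper's argument: Proposition \ref{no diag} followed by Lemma \ref{clique}. The paper's proof stops there. It treats only infinite-dimensional $\mathcal V$, applies Lemma \ref{clique} to $P\mathcal VP$ without checking that it is infinite-dimensional, and never produces an anticlique or obstruction. Your finite-dimensional branch is therefore a necessary addition (for example, $\mathbb C I_{\mathcal H}$ has no clique).

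That branch has a gap at its last step. The projection $Q$ from selectivity (1) only satisfies $Q/k\le R_k$, so it is not diagonal in $(x_n)$. Elements of $Q\mathcal VQ$ are then scalars plus compacts that are \emph{not} diagonal. Nonzero diagonal entries of $K=D_i-\lambda_iP$ do not make $QKQ$ injective on $Q\mathcal H$. For instance, if $K$ has entries $1/m$ and $-1/m$ at $x_{2m-1}$ and $x_{2m}$, then the compression of $K$ to $\overline{\Span}\{x_{2m-1}+x_{2m}\}$ is $0$. Bisection does not exclude this, since the $d_{i,n}$ may approach $\lambda_i$ from both sides inside every $R_k$.

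The missing ingredient is sign-definiteness. Take the $D_i$ self-adjoint and let the bisection fix $(R_k)$ and the limits $\lambda_i$. Split each $R_k$ into the finitely many diagonal pieces given by the signs of $(d_{i,n}-\lambda_i)_{i\le r}$. By selectivity (2) and closure under finite changes, some piece lies in $\mathcal F$. Choose a sign pattern that occurs for infinitely many $k$; you need this subsequence because $\mathcal F$ is not assumed upward closed. Then fuse those pieces with selectivity (1).
\begin{itemize}
\item If the pattern is identically $0$, a single piece is already an anticlique.
\item Otherwise some $\pm(D_i-\lambda_iP)$ has strictly positive entries on every piece. Take $Q'=Q/1\in\mathcal F$, which lies under the first piece. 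Then $\pm Q'(D_i-\lambda_iP)Q'\in Q'\mathcal VQ'$ is compact, because $Q-Q/j$ has finite rank and $Q/j$ lies under pieces on which $\|D_i-\lambda_iP\|\to 0$. It is also strictly positive on $Q'\mathcal H$, hence injective with dense range, so $Q'$ is an obstruction.
\end{itemize}

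Second, the basis transport you rely on does not work. The unitary $U$ built before Definition \ref{wide def} only permutes $\mathfrak B$, so it cannot carry a diagonalizing basis $(x_n)$ of $P\mathcal H$ onto $\mathfrak B$. Conjugation by a general unitary does not preserve $\mathcal F$, since wideness and $P/n$ are both defined through $\mathfrak B$. So the clique you pull back need not lie in $\mathcal F$. The paper's ``identifying $\mathcal V$ with $P\mathcal V'P$'' hides the same issue.

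You can bypass the transport entirely. Run the \cite{kks} argument behind Lemma \ref{clique} inside $P\mathcal H$ with the basis $(x_n)$, where nothing has to stay in $\mathcal F$. It gives $T_0\in B(P\mathcal H,\ell^2(\mathbb N))$ with $T_0(P\mathcal V'P)T_0^*$ weakly dense. Then $T=T_0P$ satisfies $T\mathcal VT^*=T_0(P\mathcal VP)T_0^*\supseteq T_0(P\mathcal V'P)T_0^*$. Lemma \ref{wide} then gives a clique for $\mathcal V$ in $\mathcal F$ directly, without needing $Q\le P$.
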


\begin{proof}

Let $\mathcal V\subseteq B(\mathcal H)$ be an infinite-dimensional operator system. We can compress $\mathcal V$ to a subspace with countably infinite dimension if necessary, so we will assume that the dimension of $\mathcal H$ is countably infinite. Let $\{h_n\}$ be an orthonormal basis for $\mathcal H$. If there exist $P\in \mathcal F$ such that $P\mathcal V P$ is diagonalizable, then we apply Lemma \ref{clique}, and finish. Otherwise, by Proposition \ref{no diag}, if $\mathcal V$ does not have an infinite quantum clique in $\mathcal F$, then there exists $P\in\mathcal P$ and an operator system $\mathcal V'\subseteq\mathcal V$ such that the compression $P\mathcal V' P$ is infinite-dimensional and diagonalizable. In the latter case, identifying $\mathcal V$ with $P\mathcal V' P$ and applying Proposition 3.8 again, $\mathcal V$ has an infinite quantum clique in $\mathcal F$. Thus, $\mathcal F$ is Ramsey. This completes the proof.

\end{proof}



\begin{thebibliography}{10}

\bibitem{bkk} Bény, C., Kempf, A., and Kribs, D.W., \textit{Quantum error correction on infinite- dimensional Hilbert spaces}, J. Math. Phys. \textbf{50} (2009), 062108. 
\bibitem{cdm} Calder\'on, D., Di Prisco, C., Mijares, J., \textit{Ramsey subsets of the space of infinite block sequences of vectors}, Fund. Math. \textbf{257} (2022), No. 2, 189--216.
\bibitem{dmn} Di Prisco, C., Mijares, J., Nieto, J.  \textit{Local Ramsey Theory: An abstract approach}. Math. Log. Quart. \textbf{63}, No. 5 (2017) 384--396.
 \bibitem{dmu} Di Prisco, C., Mijares, J. and Uzc\'ategui C., \textit{Ideal games and Ramsey sets},  Proc. Amer. Math. Soc. \textbf{140} (2012), No. 7, 2255--2265.
\bibitem{dsw} Duan R., Severini, S., and Winter, A., \textit{On Zero--Error Communication via Quantum
Channels in the Presence of Noiseless Feedback}, in IEEE Trans. on Inf. Theory, \textbf{62} (2016), No. 9, 5260--5277.
\bibitem{far} Farah, I.,  \textit{Semiselective Coideals}, Mathematika, \textbf{45} (1998), 79--103.
\bibitem{hmtu} Hru\v{s}ak M., Meza-Alcantara D., Thummel E. and Uzc\'ategui C., \textit{Ramsey type properties of ideals}, Ann. Pure Appl. Logic \textbf{168} (2017), No. 11, 2022--2049.
\bibitem{kks} Kennedy, Matthew; Kolomatski, Taras; Spivak, Daniel,  \textit{An infinite quantum Ramsey theorem}. J. Operator Theory \textbf{84} (2020), No.1, 49--65.
\bibitem{lov} Lovász, L.,  \textit{On the Shannon capacity graph}, IEEE Transactions on Information Theory, \textbf{25} (1979), No. 1,1--7.
\bibitem{mathias} Mathias, A. R, \textit{Happy families}, Ann. Math. Logic, \textbf{12} (1977), No. 1, 59--111.

 
\bibitem{goyoMLQ07} J.G. Mijares, \textit{A notion of selective ultrafilter corresponding to topological Ramsey spaces}, Mat. Log. Quart. \textbf{53}, No. 3 (2007), 255--267
 
\bibitem{ram} Ramsey, F.,  \textit{On a problem of formal logic}, Proc. London Math. Soc. Ser. 2, \textbf{30}, (1929), 264--286.
\bibitem{sole} Solecki S., \textit{Analytic ideals and their applications}, Ann. Pure Appl. Logic \textbf{99} (1999), No. 1--3, 51--72.
\bibitem{stah} Stahlke, D., \textit{Quantum Zero-Error Source-Channel Coding and Non-Commutative Graph Theory},  IEEE Trans. on Inf. Theo, \textbf{62}, No. 1, 2016.
\bibitem{tod} Todorcevic, S.,  \textit{Introduction to topological Ramsey spaces}, Princeton University Press, 2010.
\bibitem{weav17} Weaver, N., \textit{A “quantum” Ramsey theorem for operator systems}. Proc. Amer. Math. Soc. \textbf{145} (2017), 4595--4605.
\bibitem{weav21} Weaver,  N., \textit{Quantum graphs as quantum relations}, J. Geom. Anal. \textbf{31} (2021), No. 9, 9090--9112. 
\end{thebibliography}
\end{document}